\DeclarePairedDelimiterX\setc[2]{\{}{\}}{\,#1 \;\delimsize\vert\; #2\,}
\newtheorem{theorem}{Theorem}[section]
\newtheorem{proposition}[theorem]{Proposition}
\newtheorem{lemma}[theorem]{Lemma}
\newtheorem{question}[theorem]{Question}
\newtheorem{conjecture}[theorem]{Conjecture}
\theoremstyle{definition}
\newtheorem{remark}[theorem]{Remark}
\newtheorem{problem}[theorem]{Problem}
\newtheorem{definition}[theorem]{Definition}
\newcommand{\eqdef}{\;{:=}\;}
\newcommand\Symp{\operatorname{Symp}}
\def\bigmid{\ \rule[-3.5mm]{0.1mm}{9mm}\ }
\newcommand{\II}{{\mathbb I}}
\newcommand{\CC}{{\mathbb C}}
\newcommand{\RR}{{\mathbb R}}
\newcommand{\NN}{{\mathbb N}}
\newcommand{\mw}{{{\mathrm {M}}}}
\newcommand{\lmw}{{\mathrm {M}}_{\mathrm{Sp}}}
\newcommand{\nlmw}{\mathrm{M}_{{\Symp}}}
\newcommand{\C}{{\mathrm{Conv}}}
\newcommand{\sv}{{\overline{{\mathrm{Vol}}}}}
\pgfplotsset{compat=1.18}
\begin{document}

\title[Symplectic variations of convex bodies and the mean width]{Symplectic variations of convex bodies and the mean width}
\author{Jonghyeon Ahn and Ely Kerman${}^*$}
\address{Department of Mathematics,
University of Illinois Urbana-Champaign\\Urbana, IL, 61801, USA.}
\email{ja34@illinois.edu, ekerman@illinois.edu $\text{($*$-corresponding author)}$}
\thanks{This research was supported by a grant from the Simons Foundation and funds from the Campus Research Board of the University of Illinois Urbana-Champaign.}

\date{\today}

\begin{abstract}
In this work, we study convex bodies in $\RR^{2n}$ with the property that their mean width cannot be infinitesimally decreased by  symplectomorphisms. 
The common theme of our results is that toric symmetry is a preferred feature of convex bodies with this property. 
\end{abstract}

\maketitle

\section{Introduction}   

Symplectomorphisms of $\RR^{2n}$ preserve the volume of subsets of $\RR^{2n}$. They do not preserve other classical measurements of subsets such as the surface area or the mean width. For any such measurement,  a natural problem then is to characterize those subsets of $\RR^{2n}$ which are in optimal symplectic position in the sense that the measurement of the subset is the smallest among the measurements of all its images under a natural class of  symplectomprphisms. In this work, we consider this problem for the mean width. Our primary motivation for this choice is the relationship between the mean width and the symplectic capacity of convex bodies that was established by Artstein-Avidan and Ostrover in \cite{ao}.

Recall that the mean width of a convex body $K$ in $\RR^{d}$ is defined as
\begin{align*}
    \mw (K) =\int_{S^{d-1}}(h_K(u)+h_K(-u))\, d \sigma,
\end{align*}
where
\begin{align*}
    h_K (u) =\sup_{k \in K} \, \langle k,u \rangle
\end{align*}
is the {support function} of $K$, and $\sigma$ is the rotationally invariant probability measure on the unit sphere  $S^{d-1} \subset \RR^d$. The quantity $h_K(u)+h_K(-u)$ being averaged is the caliper width of $K$ in the $u$ direction. This definition extends to any bounded subset $X \subset \RR^d$ and, denoting the convex hull of $X$ by $\C(X)$, one has $\mw(\C(X)) = \mw(X)$.

The general problem considered here is the following.
\begin{problem}\label{pr2}
Characterize the convex bodies $K \subset \RR^{2n}$ that satisfy
\begin{align}\label{p3}
  \inf_{\phi \in \Symp} \left\{\mw (\phi (K))\right\} =\mw (K),
\end{align}
where $\Symp$ is the group of symplectomorphisms of $\RR^{2n}$.
\end{problem}
We also consider the version of this problem involving only linear symplectic maps.
\begin{problem}\label{pr1}
Characterize  the convex bodies $K \subset \RR^{2n}$ that satisfy
\begin{align}\label{p1}
  \inf_{P \in \mathrm{Sp}
(2n)} \mw (PK)  = \mw (K),
\end{align}
where $\mathrm{Sp}(2n)$ is the group of symplectic matrices.
\end{problem}

An important first step here is to identify the convex bodies whose mean width cannot be decreased by (linear) symplectic maps near the identity. A subset of $\RR^{2n}$ is said to be $\mathit{toric}$ if it is invariant under the standard Hamiltonian action of the $n$-dimensional torus on $\RR^{2n} =\CC^n$ given by
$$(\theta_1, \dots, \theta_n) \cdot (z_1, \dots,z_n) = \left(e^{i\theta_1}z_1, \dots, e^{i \theta_n} z_n\right).$$
Our main result establishes that toric subsets are symplectic critical points of the mean width in the following sense.

\begin{theorem}\label{variation}
If $K$ is a toric convex domain, then for any smooth path $\phi^t$ in $\Symp$ with $\phi^0 = \mathrm{Id}$, $t=0$ is a critical point of the function
\begin{align*}
   t \mapsto \mw(\phi^t(K)).
\end{align*}
In fact, the derivative $\left.\frac{d}{dt}\right|_{t=0} M(\phi^t(K))$
 exists and is equal to zero.
\end{theorem}

For linear symplectic maps we prove that toric convex bodies are local minima for the mean width in the following sense.

\begin{theorem}\label{linear}
If $K$ is a toric convex domain, then the identity matrix  $\II \in \mathrm{Sp}(2n)$ is a local minimum of the function $\mathrm{Sp}(2n) \to \RR$ defined by 
\begin{align*}
 P \mapsto \mw(PK).
\end{align*}
Moreover, if $K$ is a toric convex body which is strictly convex and has a $C^2$-smooth boundary, then $\II \in \mathrm{Sp}(2n)$ is an isolated local minimum.
\end{theorem}

\begin{remark}\label{toric}
The mean width of a subset $X \subset \RR^d$ is unchanged by the action of the orthogonal group, $\mathrm{O}(d)$.  Hence, in both Theorem \ref{variation} and Theorem \ref{linear}, the hypothesis that $K$ is toric can be replaced by the more general hypothesis that $QK$ is toric for some $Q$ in $\mathrm{U}(n) = \mathrm{Sp}(2n) \cap \mathrm{O}(2n)$. 
\end{remark}

We conjecture that the assertion of Theorem \ref{linear} is true globally rather than just near the identity.

\begin{conjecture}
    If $K \subset \RR^{2n}$ is a toric convex domain, then \begin{align*}
  \inf_{P \in \mathrm{Sp}
(2n)} \mw (PK)  = \mw (K).
\end{align*}
\end{conjecture}

\begin{remark}An early version of this paper contained a mistaken proof of this assertion. The authors are grateful to the anonymous referee for pointing out the crucial error. This conjecture remains open even when $K$ is a symplectic ellipsoid or a symplectic polydisk in $\RR^4$. \end{remark}

\bigskip

One might hope that an analysis of the second variation might imply that toric domains are also local minima in the setting of  Theorem \ref{variation}. A first question in this direction is the following.

\begin{question}\label{second} Suppose that $K \subset \RR^{2n}$ is toric, strictly convex and has a smooth boundary. Let $\phi^t$ be a smooth path in $\Symp$ that passes through the identity map at $t=0$ and satisfies $ \left.\frac{d}{dt}\right|_{t=0} \phi^t(p) \notin T_p (\partial K)$ for some $p \in \partial K$. Is it true that 
  \begin{align}\label{ge}
    \left.\frac{d^2}{dt^2}\right|_{t=0} \mw(\phi^t(K)) > 0?
\end{align}  

\end{question}

Unfortunately, the simplest example reveals that the strict inequality \eqref{ge} does not hold in general. Consider the  unit disc $B^2 \subset \RR^2$ and the Hamiltonian flow, $\phi^t_H$, of a smooth function $H \colon \RR^2 \to \RR$. A straightforward computation, in polar coordinates $(\theta,r)$, yields
\begin{align}\label{hess}
    \left.\frac{d^2}{dt^2}\right|_{t=0} \mw(\phi_H^t(B^2)) = \frac{1}{\pi}\int_0^{2\pi} \left( \left(\frac{\partial^2H}{\partial \theta^2}(\theta,1)\right)^2-\left(\frac{\partial H}{\partial \theta}(\theta,1)\right)^2 \right)\, d \theta.
\end{align} 
Wirtinger's inequality, applied to $\frac{\partial H}{\partial \theta}(\theta,1)$, implies that the right hand side of \eqref{hess} is nonnegative and is equal to zero if and only if \begin{align*} \frac{\partial H}{\partial \theta}(\theta,1) =k_1 \sin (\theta -k_2)\end{align*} for constants $k_1$ and $k_2$.  Given this, one can easily find a function $G$, say, $G(\theta, r) =r^2 \cos \theta$, whose Hamiltonian vector field is not tangent to $\partial B^2$ and whose Hamiltonian flow $\phi^t_G$, satisfies
$$
 \left.\frac{d^2}{dt^2}\right|_{t=0} \mw(\phi_G^t(B^2)) = 0.
$$

\subsection{Context}
Here we discuss some of the motivations for considering Problem \ref{pr2} and  Problem \ref{pr1} and describe some implications of Theorem \ref{variation} and Theorem \ref{linear}.

\subsubsection{Motivation 1: Best positions in convex geometry} One motivation for studying Problem \ref{pr1} comes from convex geometry where it is common to consider the optimization of measurements over the space of positions of a convex body $K \subset \RR^{d}$, that is, the images of $K$ under matrices in $\mathrm{SL}(d)$. This idea plays a crucial role in fundamental results such as Ball's reverse isoperimetric inequality from \cite{ball}.

In \cite{gr}, Green gives a complete characterization of convex bodies $K \subset \RR^2$ that satisfy $$\inf_{T \in \mathrm{SL}(2)} \mw (TK)  = \mw (K).$$ This yields the following complete answer to Problem \ref{pr1} in dimension two.

\begin{theorem}[Green, \cite{gr}]\label{green}
A convex body $K \subset \RR^2$ satisfies $$\inf_{P \in \mathrm{\mathrm{Sp}(2)}} \mw (PK)  = \mw (K)$$
if and only if $$\int_0^{2\pi} h_{K}(\theta)\cos 2 \theta \, d \theta =0=\int_0^{2\pi} h_{K}(\theta)\sin  2 \theta \, d \theta.$$
\end{theorem}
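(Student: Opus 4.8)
The plan is to reduce \eqref{p1} to a one–variable convexity statement along the diagonal subgroup. Since $\mathrm{Sp}(2)=\mathrm{SL}(2)$, the singular value decomposition gives $\mathrm{SL}(2)=SO(2)\cdot\{D_t\mid t>0\}\cdot SO(2)$ with $D_t=\mathrm{diag}(t,t^{-1})$, and, because the mean width is invariant under rotations and under translations, $\mw(UD_tR_\beta K)=\mw(D_tR_\beta K)$ for all $U,R_\beta\in SO(2)$. Hence $\inf_{P\in\mathrm{SL}(2)}\mw(PK)=\inf_{t>0,\ \beta\in[0,2\pi)}\mw(D_t R_\beta K)$, so \eqref{p1} holds if and only if, for every $\beta$, the function $t\mapsto\mw(D_tR_\beta K)$ is minimized at $t=1$ (where its value equals $\mw(R_\beta K)=\mw(K)$).

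Next I would record a formula for $\mw(D_tK')$. Writing $S_{K'}$ for the surface area measure of a convex body $K'\subset\RR^2$ (a finite nonnegative measure on $S^1$, valid without any smoothness hypothesis), the mixed–volume identities $\mathrm{Per}(L)=2V(L,B^2)$, $V(D_tK',B^2)=V(K',D_t^{-1}B^2)$ and $V(K',L)=\tfrac12\int_{S^1}h_L\,dS_{K'}$, together with $\pi\,\mw(L)=\mathrm{Per}(L)$, give $\pi\,\mw(D_tK')=\int_{S^1}\sqrt{t^{-2}\cos^2u+t^2\sin^2u}\,dS_{K'}(u)$. Substituting $t=e^{\sigma}$, the integrand at each fixed $u$ equals $\bigl|\,\mathrm{diag}(e^{-\sigma},e^{\sigma})(|\cos u|,|\sin u|)\,\bigr|$; since $e^{-\sigma}$ and $e^{\sigma}$ are nonnegative convex functions of $\sigma$ and the Euclidean norm is monotone and subadditive on the nonnegative orthant, this is a convex function of $\sigma$. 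Integrating against the nonnegative measure $S_{K'}$, we conclude that $\sigma\mapsto\mw(D_{e^{\sigma}}K')$ is convex for every convex body $K'$; in particular it is minimized at $\sigma=0$ precisely when its derivative vanishes there.

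Finally I would compute that derivative and translate the condition. Differentiating under the integral at $\sigma=0$ gives $\frac{d}{d\sigma}\big|_{0}\pi\,\mw(D_{e^{\sigma}}K')=\int_{S^1}(\sin^2u-\cos^2u)\,dS_{K'}(u)=-\int_{S^1}\cos 2u\,dS_{K'}(u)$. Using the distributional identity $\int_{S^1}g\,dS_{K'}=\int_{S^1}(g+g'')h_{K'}\,du$ with $g(u)=\cos 2u$ (so $g+g''=-3\cos 2u$), and then $h_{R_\beta K}(u)=h_K(u-\beta)$, one obtains $\frac{d}{d\sigma}\big|_{0}\pi\,\mw(D_{e^{\sigma}}R_\beta K)=3\bigl(A\cos 2\beta-B\sin 2\beta\bigr)$, where $A=\int_0^{2\pi}h_K(\theta)\cos 2\theta\,d\theta$ and $B=\int_0^{2\pi}h_K(\theta)\sin 2\theta\,d\theta$. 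If $A=B=0$, then for every $\beta$ the convex function $\sigma\mapsto\mw(D_{e^{\sigma}}R_\beta K)$ has vanishing derivative at $0$, hence is minimized at $\sigma=0$; by the first paragraph $\mw(PK)\ge\mw(K)$ for all $P\in\mathrm{SL}(2)$, so \eqref{p1} holds. If $(A,B)\neq(0,0)$, then $\beta\mapsto A\cos 2\beta-B\sin 2\beta$ is a nonzero sinusoid, so for a suitable $\beta_0$ the derivative above is strictly negative; then $\mw(D_{e^{\sigma}}R_{\beta_0}K)<\mw(R_{\beta_0}K)=\mw(K)$ for small $\sigma>0$, and the infimum in \eqref{p1} is strictly smaller than $\mw(K)$.

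The step I expect to be the main obstacle is establishing the mean-width formula $\pi\,\mw(D_tK')=\int_{S^1}\sqrt{t^{-2}\cos^2u+t^2\sin^2u}\,dS_{K'}(u)$ in the generality needed and phrasing the convexity argument cleanly: since $K$ need not be smooth or strictly convex, $h_K+h_K''$ is only a measure, so one must work throughout with the surface area measure $S_{K'}$ (equivalently, first prove everything for smooth strictly convex $K$ and pass to the limit, using that the mean width and the $\mathrm{SL}(2)$–action are continuous in the Hausdorff metric). Everything else—the $KAK$ reduction, differentiation under the integral sign, and the integration by parts converting $\int_{S^1}\cos 2u\,dS_K$ into $-3\int_0^{2\pi}h_K(\theta)\cos 2\theta\,d\theta$—is routine bookkeeping.
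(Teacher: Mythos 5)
The paper states this as a cited result of Green \cite{gr} and does not reproduce a proof, so there is no argument in the paper to compare against; what can be assessed is only whether your proof is correct, and it is. The $KAK$ reduction to the one-parameter families $\sigma\mapsto D_{e^\sigma}R_\beta K$ is valid, the mixed-volume identity $\pi\,\mw(D_tK')=\int_{S^1}\sqrt{t^{-2}\cos^2u+t^2\sin^2u}\,dS_{K'}(u)$ is correct, the integrand is convex in $\sigma$ (one can also verify directly that its second derivative in $\sigma$ is bounded below by $\sqrt{e^{-2\sigma}\cos^2u+e^{2\sigma}\sin^2u}>0$, so each profile is in fact strictly convex), and the first-variation computation giving $\left.\tfrac{d}{d\sigma}\right|_{0}\pi\,\mw(D_{e^\sigma}R_\beta K)=3\bigl(A\cos 2\beta-B\sin 2\beta\bigr)$ with $A=\int_0^{2\pi}h_K\cos 2\theta\,d\theta$, $B=\int_0^{2\pi}h_K\sin 2\theta\,d\theta$ is right. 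The regularity issue you flagged is in fact not an issue: because you work with the surface area measure $S_{K'}$ throughout and the test function $g=\cos 2u$ is smooth, the identity $\int_{S^1}g\,dS_{K'}=\int_{S^1}(g+g'')h_{K'}\,du$ holds for every convex body without any smoothness or strict convexity hypothesis, so no separate approximation step is needed. Both implications then follow as you say: $A=B=0$ makes $\sigma=0$ a critical point of each convex profile and hence a global minimum, while $(A,B)\neq(0,0)$ produces a $\beta_0$ with strictly negative derivative at $\sigma=0$, which drops the infimum strictly below $\mw(K)$.
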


In \cite{gm}, Giannopoulos and Milman give a complete characterization of convex bodies $K \subset \RR^d$ that satisfy $$\inf_{T \in \mathrm{SL}(d)} \mw (TK)  = \mw (K).$$ Their work implies the following partial answer to Problem \ref{pr1}.
\begin{theorem}[Giannopoulos and Milman, \cite{gm}]\label{gm}
A convex body $K \subset \RR^{2n}$ satisfies $$\inf_{P \in \mathrm{\mathrm{Sp}(2n)}} \mw (PK)  = \mw (K)$$
if $$\int_{S^{2n-1}} h_{K}(u) \langle u,v\rangle^2 \, d \sigma = \frac{\mw (K)}{4n}$$
for every $v \in S^{2n-1}.$
\end{theorem}

A complete answer to Problem \ref{pr1} seems to be far off. The special role to be played by conditions like toric symmetry is already evident when one restricts Problem \ref{pr1} to the set of ellipsoids in standard position. We define this set as 
$$\mathcal{E}_{\mathrm{stnd}}^{2n} =\left\{E(\mathbf{a},\mathbf{b}) \mid \mathbf{a}=(a_1, \dots, a_n),\, \mathbf{b}=(b_1, \dots, b_n) \in\RR^n_{>0} \right\}$$ where 
\begin{align*}
    E(\mathbf{a},\mathbf{b})= \left\{ (x_1, \dots, x_n, y_1, \dots, y_n) \in \mathbb{R}^{2n} \bigmid  \sum_{i=1}^n \left(\frac{x_i^2}{a^2_i} +\frac{y_i^2}{b^2_i}\right)\leq 1\right\}.
\end{align*}
Note that the ellipsoids with toric symmetry are precisely those of the form  $E(\mathbf{a}, \mathbf{a})$.

\begin{proposition}\label{ellipsoid}
The identity matrix  $\II \in \mathrm{Sp}(2n)$  is a local minimum of the function $\mathrm{Sp}(2n) \to \RR$ defined by 
\begin{align*}
 P \mapsto \mw(PE(\mathbf{a}, \mathbf{b} ))
\end{align*} 
if and only if $\mathbf{a}= \mathbf{b}$. Moreover, $\II$ is an isolated local minimum of the function $P \mapsto \mw(PE(\mathbf{a}, \mathbf{a} ))$ for any $\mathbf{a} \in \RR^n_{>0}.$
\end{proposition}

As described in Remark \ref{toric}, toric symmetry is not a necessary condition for a subset to be in optimal linear symplectic position with respect to the mean width, locally, because the mean width is preserved by the action of $\mathrm{U}(n) \subset \mathrm{Sp}(2n) $. The following result demonstrates that there is more to the fact that toric symmetry is not necessary in the lenear case. 

\begin{proposition}\label{lag}
For the Lagrangian bidisk
$$\mathbf{P}_L = \left\{ (x_1, x_{2}, y_1, y_{2}) \in \mathbb{R}^{4} \bigmid  x_1^2 +x_2^2 \leq 1,\, y_1^2 +y_2^2 \leq 1\right\},$$ the identity matrix is an isolated  local minimum of the function  $\mathrm{Sp}(4) \to \RR$ defined by
\begin{align*}
 P \mapsto \mw(P \mathbf{P}_L).
\end{align*} 
Moreover, $\mathbf{P}_L$ is not equal to $QK$ for any toric convex body $K$ and any matrix $Q \in \mathrm{U}(2)$.
\end{proposition}

The following question remains unresolved.
\begin{question}
    Does there exist a smooth path $\phi^t$ in $\Symp$ with $\phi^0 = \mathrm{Id}$ such that $$\left.\frac{d}{dt}\right|_{t=0} M(\phi^t(\mathbf{P}_L)) \neq 0?$$
\end{question}

However, for global variations the work of Ramos from \cite{vr} implies the following.

\begin{proposition}\label{vr}(Ramos, \cite{vr})
There exists a symplectomorphism $\Phi$ of $\RR^{2n}$ such that
\begin{align*}
  \mw (\Phi(\mathbf{P}_L))<\mw (\mathbf{P}_L).
  \end{align*}
\end{proposition}

\subsubsection{Motivation 2: Symplectic refinements of Urysohn's inequality}



Define the normalized volume of a subset $X$ of $\RR^{2n}$ to be
$$\sv(X) \eqdef \left(\frac{\mathrm{Vol}(X)}{\mathrm{Vol}(B^{2n})} \right)^{\frac{1}{n}}.$$

The following inequality, relating the volume to the mean width, is due to to Urysohn.
\begin{theorem}[Urysohn's inequality]\label{ury} For every convex body $K \subset \RR^{2n}$, one has
\begin{align}\label{ur}
    \sv(K) \leq \frac{(\mw(K))^2}{4}
\end{align}
with equality if and only if $K$ is a ball. 
\end{theorem}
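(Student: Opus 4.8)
The plan is to derive Theorem~\ref{ury} from the classical Urysohn inequality, which in turn follows from the Brunn--Minkowski inequality by averaging over rotations. Since the uniform measure $\sigma$ on $S^{2n-1}$ is invariant under the antipodal map $u \mapsto -u$, I first record that $\mw(K) = 2\int_{S^{2n-1}} h_K(u)\, d\sigma(u)$, so that $\tfrac12\mw(K)$ is the average of $h_K$ over the sphere.

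Next I would introduce the rotational Minkowski mean of $K$: letting $d\rho$ denote the Haar probability measure on $\mathrm{SO}(2n)$, let $\tilde K$ be the convex body whose support function is
\begin{align*}
  h_{\tilde K}(u) \;=\; \int_{\mathrm{SO}(2n)} h_K\bigl(\rho^{-1}u\bigr)\, d\rho .
\end{align*}
This is the (Aumann) Minkowski integral of the continuous family $\{\rho(K)\}_{\rho \in \mathrm{SO}(2n)}$ and is a genuine convex body. Because $\mathrm{SO}(2n)$ acts transitively on $S^{2n-1}$ and pushes $d\rho$ forward to $\sigma$, the right-hand side is independent of $u$ and equals $\tfrac12\mw(K)$; hence $\tilde K$ is the ball of radius $\tfrac12\mw(K)$ centered at the origin, so in particular $\mw(\tilde K) = \mw(K)$ and $\vv(\tilde K) = \bigl(\tfrac12\mw(K)\bigr)^{2n}\vv(B^{2n})$.

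The core step is the volume bound $\vv(\tilde K) \ge \vv(K)$. Approximating $\tilde K$ in the Hausdorff metric by finite Minkowski averages $\tfrac1N\sum_{i=1}^{N}\rho_i(K)$ and applying the iterated Brunn--Minkowski inequality gives $\vv\bigl(\tfrac1N\sum_{i=1}^N\rho_i(K)\bigr)^{1/2n} \ge \tfrac1N\sum_{i=1}^N\vv\bigl(\rho_i(K)\bigr)^{1/2n} = \vv(K)^{1/2n}$; letting $N \to \infty$ and using continuity of volume under Hausdorff convergence yields $\vv(\tilde K)^{1/2n} \ge \vv(K)^{1/2n}$. Plugging in the value of $\vv(\tilde K)$ from the previous paragraph, $\bigl(\tfrac12\mw(K)\bigr)^{2n}\vv(B^{2n}) \ge \vv(K)$; dividing by $\vv(B^{2n})$ and taking $n$-th roots gives $\sv(K) \le (\mw(K))^2/4$, which is \eqref{ur}.

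It remains to treat equality. For a ball $B_r$ of radius $r$ one computes directly $\mw(B_r) = 2r$ and $\sv(B_r) = r^2$, so equality holds. Conversely, if $\sv(K) = (\mw(K))^2/4$ then $\vv(K) = \vv(\tilde K)$, so equality must persist throughout the Brunn--Minkowski chain above. By the equality case of the Brunn--Minkowski inequality in its integral form (alternatively, by repeatedly bisecting the parameter space $\mathrm{SO}(2n)$ and tracking the equality case of the two-body inequality), all the rotates $\rho(K)$ are pairwise homothetic; since they have the common volume $\vv(K)$, they are in fact translates of one another, say $\rho(K) = K + v(\rho)$. Comparing centroids forces $v(\rho) = \rho(z) - z$, where $z$ is the centroid of $K$, so after translating $K$ so that $z = 0$ we find that $K$ is invariant under all of $\mathrm{SO}(2n)$ and is therefore a ball. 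I expect the equality analysis to be the main obstacle: one must pin down the equality case of Brunn--Minkowski for a continuum of bodies and extract from it the full rotational invariance of $K$; the rest is routine bookkeeping with support functions together with the transitivity of the rotation action.
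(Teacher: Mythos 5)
Your argument is correct and is the standard rotational-symmetrization proof of Urysohn's inequality (via the Minkowski/Aumann mean of the rotates of $K$ and iterated Brunn--Minkowski); note, however, that the paper does not actually prove Theorem~\ref{ury}, but simply cites it as a classical result, so there is no in-paper proof to compare against. Your normalization computations ($\mw(B_r)=2r$, $\sv(B_r)=r^2$, and the step from $\vv(\tilde K)\ge\vv(K)$ to $\sv(K)\le(\mw(K))^2/4$ using $\sv=\left(\vv/\vv(B^{2n})\right)^{1/n}$) are consistent with the paper's conventions, and the equality analysis via homothety, equal volumes, and centroids forcing full $\mathrm{SO}(2n)$-invariance is the right way to pin this down.
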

The symplectic invariance of the volume can be used to refine \eqref{ur} to
\begin{align}\label{ref1}
    \sv(K) \leq \frac{1}{4}\left(\inf_{P \in \mathrm{Sp} 
(2n)} \mw (PK)\right)^2 
\end{align}
and then, further, to
\begin{align}\label{ref2}
 \sv(K) \leq \frac{1}{4}\left(\inf_{\phi \in \Symp} \mw (\phi (K)) \right)^2.
\end{align}
Problems and \ref{pr2} and \ref{pr1}  can be viewed as the first steps towards a deeper understanding of these refinements. In what follows we set 
$$\lmw(K) \eqdef \inf_{P \in \mathrm{Sp} 
(2n)} \mw (PK)$$
and 
$$\nlmw(K) \eqdef \inf_{\phi \in \Symp} \mw (\phi (K)).$$

In dimension two, refinements \ref{ref1} and \ref{ref2} are both understood and both substantial. The mean width of a convex body in $K$ in $\RR^{2}$ is equal to the length of its boundary divided by $\pi$, and Urysohn's inequality is equivalent to the classical isoperimetric inequality,
$$\mathrm{Area}(K) \leq \frac{(\mathrm{Length}(\partial K))^2}{4\pi}.$$
If one views the right-hand term as a means to approximate the left-hand term, it is easy to see, say for $K= [0,\epsilon] \times [0, \frac{1}{\epsilon}]$, that this approximation can be arbitrarily bad. Refinement \eqref{ref1} mitigates this problem. In this setting,  refinement \eqref{ref1} looks like
$$\mathrm{Area}(K) \leq \frac{1}{4\pi}\left(\inf_{P \in \mathrm{SL}(2)}\mathrm{Length}(\partial (PK))\right)^2$$  and the work of Gustin, from \cite{gu},  implies the following {\em reverse isoperimetric inequality},
\begin{align*}
 \frac{1}{4\pi}\left(\inf_{P \in \mathrm{SL}(2)}\mathrm{Length}(\partial (PK))\right)^2 \leq \frac{3 \sqrt{3}}{\pi}\mathrm{Area}(K),   
\end{align*} 
where equality holds if and only if $K$ is an equilateral triangle. 

The stronger refinement \eqref{ref2} completely resolves this issue in dimension two. In particular, we have the equality 
\begin{align*} 
    \mathrm{Area}(K) =\frac{1}{4\pi}\left(\inf_{\phi \in \Symp} \mathrm{Length}(\partial (\phi(K)))\right)^2
\end{align*}
 since one can easily find a smooth area preserving map such that the image of $K$ is arbitrarily close to a ball with respect to the Hausdorff metric. Unsurprisingly, the nature of refinements \eqref{ref1} and \eqref{ref2} in higher dimensions is less clear.

These considerations suggests two natural questions concerning the possible existence of {\em symplectic reverse Urysohn inequalities}. 

\begin{question}\label{l} For $n>1$, is there a constant $\gamma_n$ such that 
$$\lmw (K)  \leq \gamma_n \sv(K)^{\frac{1}{2}}$$ for all convex bodies $K \subset \RR^{2n}$?
\end{question}

\begin{question}\label{c}  For $n>1$, is there a constant $\Gamma_n$ such that 
$$ \nlmw(K) \leq \Gamma_n \sv(K)^{\frac{1}{2}}$$ for all convex bodies $K \subset \RR^{2n}$?
\end{question}

\begin{remark}If one replaces $\mathrm{Sp}(2n)$, in Question \ref{l}, by the larger group $\mathrm{SL}(2n)$, then such inequalities were shown to hold for symmetric convex bodies by Figiel and Tomczak-Jaegermann in \cite{ft}.\footnote{The difference between these two cases arises as an issue to be circumvented in \cite{ao-n} (see \S 4 therein).} \end{remark}

\subsubsection{Motivation 3: Inequalities between the mean width and symplectic capacities} Let $c_{EH}$ denote the first Ekeland and Hofer capacity from \cite{eh1}. Here, we normalize $c_{EH}$ so that the capacity of the closed unit ball, $B^{2n} \subset \RR^{2n}$, is equal to one. Much is known about the relation between the capacity of a convex set and its volume. A fundamental result in this direction is the following.

\begin{theorem}[Artstein-Avidan, Milman and Ostrover, \cite{amo}]
  There is a universal constant $A_0$, independent of $n$, such that for every convex body $K \subset \RR^{2n}$, one has
\begin{align}\label{amo}
    c_{EH}(K) \leq A_0 \sv(K).
\end{align}
\end{theorem}
This result was inspired by Viterbo's long-standing conjecture that inequality \eqref{amo} should hold for $A_0=1$. Recently, in \cite{hko}, Haim-Kislev and Ostrover disproved Viterbo's conjecture with the construction of a counterexample. In particular, their work implies that the best universal constant $A_0$, above, must be greater than one.

The following relationship between $c_{EH}$ and mean width was established in \cite{ao}.
\begin{theorem}[Artstein-Avidan and Ostrover, \cite{ao}]
    For every convex body $K \subset \RR^{2n}$, one has
\begin{align}\label{ao}
    c_{EH}(K) \leq \frac{(\mw(K))^2}{4}.
\end{align}
For symmetric $K$, equality holds if and only if $K$ is a ball.
\end{theorem}
Taken together, these results imply  that there exist convex bodies $K$ in $\RR^{2n}$, for $n>1$, such that 
\begin{equation}\label{weird}\sv(K) <  c_{EH}(K) < \frac{\left(\nlmw(K)\right)^2}{4}.\end{equation}
This leaves open the possibility that there may be convex bodies $K$ for which $\frac{1}{4}\left( \nlmw(K) \right)^2$ (or even $(\mw(K))^2/4$) is closer to $c_{EH}(K)$ than $\sv(K).$

\medskip

There are also interesting relationships between $\nlmw$ and the embedding capacity
$$c^B(X) =\inf\{ R^2 \mid \text{ there is a symplectic embedding } \phi \colon X \hookrightarrow RB^{2n} \}.$$
The simplest of these is the following.

\begin{lemma} For every bounded subset $X\subset \RR^{2n}$, one has
    \begin{align}\label{pinch}
\sv(X)
  \leq \frac{(\nlmw(X))^2}{4} \leq c^B(X).
\end{align}
\end{lemma}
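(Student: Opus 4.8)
The plan is to treat the two inequalities in \eqref{pinch} independently, since each is a soft consequence of results already available: Urysohn's inequality (Theorem~\ref{ury}), the identity $\mw(\C(Y))=\mw(Y)$, the volume‑invariance of symplectomorphisms, and the monotonicity of the mean width under inclusion together with the normalization $\mw(B^{2n})=2$.

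For the left inequality $\sv(X)\le (\nlmw(X))^2/4$, I would fix an arbitrary $\phi\in\Symp$ and argue as follows. Since $\phi$ preserves Lebesgue measure, $\vv(\phi(X))=\vv(X)$, so $\sv(\phi(X))=\sv(X)$; by monotonicity of volume under inclusion and $\phi(X)\subseteq\C(\phi(X))$ this gives $\sv(X)\le\sv(\C(\phi(X)))$ (if $X$ is null the inequality is trivial, so one may assume this convex hull, or its closure, is a genuine convex body). Urysohn's inequality applied to that convex body, followed by $\mw(\C(\phi(X)))=\mw(\phi(X))$, yields
\begin{align*}
\sv(X)\ \le\ \frac{\bigl(\mw(\C(\phi(X)))\bigr)^2}{4}\ =\ \frac{\bigl(\mw(\phi(X))\bigr)^2}{4}.
\end{align*}
Taking the infimum over $\phi\in\Symp$ and using that $t\mapsto t^2$ is increasing on $[0,\infty)$ to pull the infimum through the square gives the claim.

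For the right inequality $(\nlmw(X))^2/4\le c^B(X)$, I would first compute $\mw(RB^{2n})$: the support function of the ball of radius $R$ equals $R$ on the unit sphere, so every caliper width is $2R$ and $\mw(RB^{2n})=2R$. Now take any $R$ for which there is a symplectic embedding $\phi\colon X\hookrightarrow RB^{2n}$. Support functions, hence mean widths, are monotone under inclusion, so $\mw(\phi(X))\le\mw(RB^{2n})=2R$. To conclude $\nlmw(X)\le 2R$ from this, I would pass from the symplectic embedding $\phi$ to an actual element of the group $\Symp$ using the extension‑after‑restriction principle: $\phi(\overline X)$ is a compact subset of $RB^{2n}$, hence of the open ball of radius $R+\varepsilon$ for every $\varepsilon>0$, and the restriction of $\phi$ to a compact neighbourhood of $\overline X$ extends to a symplectomorphism $\psi$ of $\RR^{2n}$, which then satisfies $\psi(X)\subseteq(R+\varepsilon)B^{2n}$ and hence $\nlmw(X)\le\mw(\psi(X))\le 2(R+\varepsilon)$. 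Letting $\varepsilon\to0$ and then taking the infimum over admissible $R^2$ gives $(\nlmw(X))^2/4\le c^B(X)$.

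The left inequality is entirely routine. The one point I expect to need care is the place in the right inequality where the class of maps defining $\nlmw$ (the group $\Symp$) must be reconciled with the symplectic embeddings defining $c^B$; this is precisely what the standard extension‑after‑restriction principle for symplectic embeddings into $\RR^{2n}$ supplies, and it costs nothing after letting $\varepsilon\to0$. If one is content to define $\nlmw(X)$ as the infimum of $\mw$ over all symplectic embeddings of $X$ — an equivalent definition, by the same principle — then this step disappears and the right inequality follows immediately from monotonicity of the mean width.
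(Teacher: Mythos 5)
Your proof is correct and takes essentially the same route as the paper: the left inequality is handled identically via Urysohn's inequality, $\mw(\C(\cdot))=\mw(\cdot)$, and symplectic volume invariance, and the right inequality is, as the paper says, a consequence of monotonicity of the mean width. The extra care you take in reconciling the symplectic embeddings that define $c^B$ with the global symplectomorphisms that define $\nlmw$ via extension after restriction is a genuine subtlety that the paper leaves implicit in its one‑line justification; the paper does invoke the same principle, with its star‑shapedness hypothesis, later in its proof of Proposition~\ref{vr}.
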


\begin{proof}
The second inequality in \eqref{pinch} follows easily from the monotonicity of the mean width. The first inequality follows from Urysohn's inequality. To see this, recall  that 
$\mw (\C(X)) =\mw (X)$ and $\sv(\C(X)) \geq \sv(X)$, where $\C (X)$ is the convex hull of $X$. Urysohn's inequality implies that, for all $X\subset \RR^{2n}$ and every $\phi \in \Symp$, we have 
$$
\frac{(\mw(\phi(X)))^2}{4} = \frac{(\mw(\C(\phi(X))))^2}{4} \geq \sv(\C(\phi(X)))) \geq \sv(\phi(X))) =\sv(X).
$$
Taking the infimum over $\phi \in \Symp$ we get 
$$
\sv(X)
  \leq \frac{(\nlmw(X))^2}{4}.
$$
\end{proof}

A more subtle feature of this relationship is the following assertion which implies that $\nlmw$ detects symplectic information whenever it has room to.
\begin{proposition}\label{oneside}
 If $c^B(X)$ is strictly greater than $\sv(X)$, then \begin{align*}
  \frac{(\nlmw(X))^2}{4}>\sv(X). 
  \end{align*}
\end{proposition}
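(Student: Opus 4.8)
The plan is to prove the contrapositive: assuming $\tfrac14(\nlmw(X))^2 \le \sv(X)$ — which, by the first inequality in \eqref{pinch}, is the same as $\tfrac14(\nlmw(X))^2 = \sv(X)$ — I will show $c^B(X) \le \sv(X)$; combined with the second inequality in \eqref{pinch} this forces $c^B(X) = \sv(X)$, the negation of the hypothesis. Recall first that the mean width is monotone under inclusion and, restricted to line segments, equals $c_{2n}$ times length for a fixed constant $c_{2n}>0$ (the mean width of a unit segment), so $\mw(S) \ge c_{2n}\,\mathrm{diam}(S)$ for every bounded $S\subset\RR^{2n}$. If $\sv(X)=0$, taking $\phi_k\in\Symp$ with $\mw(\phi_k(X))\to\nlmw(X)=0$ then gives $\mathrm{diam}(\phi_k(X))\to 0$, so $X$ embeds symplectically into arbitrarily small balls and $c^B(X)=0=\sv(X)$. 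So assume $\sv(X)>0$ and set $r\eqdef\sqrt{\sv(X)}>0$, so $\nlmw(X)=2r$. Since the infimum defining $\nlmw(X)$ need not be attained, I would fix a minimizing sequence $\phi_k\in\Symp$ with $\mw(\phi_k(X))\to 2r$ and pass to the closed convex hulls $K_k\eqdef\overline{\C(\phi_k(X))}$, which are genuine convex bodies (symplectomorphisms preserve volume, so $\vv(K_k)\ge\vv(\phi_k(X))=\vv(X)>0$), with $\mw(K_k)=\mw(\phi_k(X))\to 2r$ and $\sv(K_k)\ge\sv(X)=r^2$. Urysohn's inequality (Theorem \ref{ury}) applied to $K_k$ then yields the squeeze $r^2\le\sv(K_k)\le\tfrac14(\mw(K_k))^2\to r^2$, whence also $\vv(K_k)\to\vv(X)$.

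Next I would extract a limit shape and identify it. After composing each $\phi_k$ with a translation — symplectic, and mean-width preserving — we may assume $0\in\phi_k(X)\subseteq K_k$. The estimate $\mw(K)\ge c_{2n}\,\mathrm{diam}(K)$ together with the convergence of $\mw(K_k)$ shows the $K_k$ lie in a common ball, so by the Blaschke selection theorem a subsequence converges, $K_{k_j}\to K_\infty$, in the Hausdorff metric; by the standard continuity of mean width and of volume on convex bodies, $\mw(K_\infty)=2r$ and $\vv(K_\infty)=\vv(X)>0$. Thus $K_\infty$ is a full-dimensional convex body with $\sv(K_\infty)=r^2=\tfrac14(\mw(K_\infty))^2$, and the equality case of Urysohn's inequality forces $K_\infty$ to be a Euclidean ball — of radius $r$, since its mean width is $2r$. (Alternatively, a stability version of Urysohn's inequality would show directly that the $K_k$ are Hausdorff-close to balls of radius $\to r$, bypassing Blaschke selection.)

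Writing $K_\infty=B(c,r)$, Hausdorff convergence gives $K_{k_j}\subseteq B(c,r+\delta_j)$ with $\delta_j\downarrow 0$; since $\phi_{k_j}(X)\subseteq K_{k_j}$, composing $\phi_{k_j}$ with the translation carrying $c$ to the origin embeds $X$ symplectically into $(r+\delta_j)B^{2n}$, so $c^B(X)\le(r+\delta_j)^2\to r^2=\sv(X)$, as required. The step I expect to be the main obstacle is the compactness/identification argument of the second paragraph: because the infimum defining $\nlmw$ is not attained one cannot work with a single ``optimal'' symplectomorphism and must instead control a minimizing sequence, ruling out both escape to infinity — handled by $\mw(K)\ge c_{2n}\,\mathrm{diam}(K)$ — and degeneration onto a lower-dimensional limit — handled by tracking $\vv(K_k)\to\vv(X)$ — so that the rigidity (equality) case of Urysohn's inequality can be applied to an honest, full-dimensional limiting convex body.
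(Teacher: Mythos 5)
Your proof is correct and follows the same overall strategy as the paper: both pass to the convex hulls of a mean-width minimizing sequence, anchor by translation, extract a Hausdorff-convergent subsequence, show the bodies are forced to approach a round ball, and read off $c^B(X)\le\sv(X)$ from Hausdorff containment. The point of genuine divergence is the mechanism that forces roundness. The paper invokes the \emph{stability} version of the quermassintegral inequality (Schneider, pp.\ 421--423) to conclude directly that $d_H\bigl(\C(\phi_{i_j}(X)),\, B_{\mw}(\C(\phi_{i_j}(X)))\bigr)\to 0$, and then normalizes each body by its Steiner point; you instead use Blaschke selection to produce an honest limiting convex body $K_\infty$, push $\mw$ and $\vv$ through the Hausdorff limit, and apply the \emph{equality case} of Urysohn to identify $K_\infty$ as a ball. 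Your route is the more elementary one --- it replaces a quantitative stability theorem with the rigidity case of the inequality plus continuity of mean width and volume --- and it is also more careful on two points the paper leaves implicit: the degenerate case $\sv(X)=0$, and the positivity of $\vv(K_\infty)$ (needed to ensure the limit is a full-dimensional convex body so that the equality case of Urysohn actually applies). The paper's stability route does not buy anything extra here, since it too passes to a Hausdorff-convergent subsequence before invoking stability; your parenthetical at the end correctly identifies it as the alternative. Both proofs are sound.
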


\begin{proof}
    It suffices to show that  if
    \begin{align}\label{left}
  \frac{1}{4}\left( \inf_{\phi \in \Symp}  \mw(\phi(X))\right)^2=\sv(X), 
  \end{align}
    then $c^B(X) = \sv(X).$ Equation \eqref{left}, together with \eqref{ur}, implies that there is a sequence of symplectomorphisms $\phi_i$ such that the nonnegative sequence \begin{align}\label{lim1}
        \frac{(\mw(\phi_i(X)))^2}{4} - \sv(X)
    \end{align}
converges monotonically to zero. From this, and \eqref{ur} again, it follows that 
    \begin{align}\label{lim2}
        \lim_{i \to \infty}\left(\frac{(\mw(\C(\phi_i(X))))^2}{4} -\sv(\C(\phi_i(X)))\right) = 0.
    \end{align} Composing the symplectomorphisms $\phi_i$ with suitable translations, if necessary, we may assume that the origin is the center of mass of each $\C(\phi_i(X))$. Since their mean widths are bounded, it follows that the sequence  $\C(\phi_i(X))$ is bounded. The same is then true of the sequence $\phi_i(X)$ and so we may pass to a subsequence, $\phi_{i_j}(X)$, that converges with respect to the Hausdorff metric, $d_H$. 
    Invoking the stability of the classical quermassintegral inequalities, as described in \cite{schn} (pages 421-423), it follows from \eqref{lim2} that 
    \begin{align}\label{lim3}
        \lim_{j \to \infty}d_H(\C(\phi_{i_j}(X)), B_{\mw}(\C(\phi_{i_j}(X)))) =0.
    \end{align}
    Here, $B_{\mw}(\C(\phi_{i_j}(X)))$ is the ball with same mean width as $\C(\phi_{i_j}(X)))$ and with center at the Steiner point of $\C(\phi_{i_j}(X)))$ (see \cite{schn}, page 50). Since the mean width is invariant under the convex hull operation, \eqref{lim3} is equivalent to  
\begin{align*}
        \lim_{j \to \infty}d_H(\C(\phi_{i_j}(X)), B_{\mw}(\phi_{i_j}(X))) =0.
    \end{align*}
Composing the symplectomorphisms $\phi_{i_j}$ with suitable translations, if necessary, we may assume the origin is the Steiner point of $\phi_{i_j}(X)$. With this, we have  
    \begin{align*}
        \lim_{j \to \infty}d_H\left(\C(\phi_{i_j}(X)), \frac{\mw(\phi_{i_j}(X))}{2}B^{2n}\right) =0,
    \end{align*}
By \eqref{lim1}, it follows that for all $\epsilon>0$ there is an integer $J$, such that for all $j>J$ we have   
    \begin{align*}
       \C(\phi_{i_j}(X))\subset \left((1+\epsilon)(\sv(X))^{\frac{1}{2}}\right)B^{2n}.
    \end{align*}
Since this works for any $\epsilon>0$. we have $c^B(X) \leq \sv(X)$ and the proof is complete.
\end{proof}

\begin{remark}
One might also ask if $\frac{(\nlmw(X))^2}{4}=c^B(X)$ implies that  $\sv(X)=c^B(X)$. 
\end{remark}
\medskip

 \begin{remark}The hypothesis of Proposition \ref{oneside} is that there is no full symplectic packing of a ball by $X$. The implication, that there is a $\Delta(X)>0$ such that \begin{align}\label{no}
  \frac{(\mw(\phi(X)))^2}{4} \geq \sv(X) +\Delta(X) \quad \text{for all $\phi \in \Symp$}
  \end{align}
is a manifestation of symplectic rigidity, expressed in terms of the mean width. Consider the case when $\sv(X)<c^B(X)$ and $X$ is diffeomorphic to $B^{2n}$, e.g., $X$ is an ellipsoid. By \cite{dm}, there is volume preserving diffeomorphism $\psi$ such that $\psi(X)$ is a ball, and so
\begin{align*}
  \frac{(\mw(\psi(X)))^2}{4}=\sv(X). 
\end{align*} 
As described by Viterbo in \cite{vit-lp}, one can also construct a sequence of symplectomorphisms. $\phi_{j}$. that converges to $\psi$ in the $L^p$-topology. 
However, by \eqref{no} we have
\begin{align*}
  \frac{(\mw(\phi_{j}(X)))^2}{4}\geq \sv(X) + \Delta(X), \quad \text{for all $j$. }
\end{align*} 
The fact that the mean width sees the {\em symplectic gap} between the $\phi_j$ and $\psi$ is not obvious from the construction in \cite{vit-lp}.
\end{remark}

\noindent{\em Observaton: A mean width staircase.} For $a \geq 1$, consider the family of symplectic ellipsoids $$\mathbf{E}(a)
= \left\{ (x_1,x_2,y_1,y_2) \in \RR^{4} \bigmid  x_1^2 + y_1^2 + \frac{x_2^2 + y_2^2}{a}  \leq 1\right\}.$$ 
The inequalities of \eqref{pinch} imply that the graph of 
\begin{align}\label{graph}
 a \mapsto \frac{(\nlmw(\mathbf{E}(a))^2}{4}   
\end{align} lies between that of $\sv(\mathbf{E}(a)) = \sqrt{a}$ and the intricate Fibonnaci staircase computed by McDuff and Schlenk, in \cite{mcs}, that defines the graph of $c^B(\mathbf{E}(a))$. By Proposition \ref{oneside}, the graph of \eqref{graph} lies strictly above the graph of $\sqrt{a}$ whenever the Fibonnaci staircase does, see Figure \ref{fig:mws}. Hence it forms its own version of a staircase. This is distinct from the one from \cite{mcs}. In particular, for $a \in (1,2)$ the strict inequalities 
$$
\sv(\mathbf{E}(a) < \frac{(\nlmw(\mathbf{E}(a))^2}{4} < c^B(\mathbf{E}(a))
$$
hold. The first inequality is implied by Proposition \ref{oneside}, since $c^B(\mathbf{E}(a)) = a > \sqrt{a} = \sv(\mathbf{E}(a))$ for $a \in (1,2)$. The second strict inequality follows from a direct computation of $\mw(\mathbf{E}(a))$ since  $$\frac{(\nlmw(\mathbf{E}(a))^2}{4}\leq \frac{(\mw(\mathbf{E}(a))^2}{4}=\frac{4}{9} \left( \frac{1 +\sqrt{a}+a}{1+\sqrt{a}}\right)^2 <a.$$

\begin{figure}[H]
    \centering
    \caption{$\nlmw(\mathbf{E}(a))^2/4$ lies below $c^B(\mathbf{E}(a))$ and strictly above $\sqrt{a}$, when there is room. For $1<a<2$, it also lies below $\mw(\mathbf{E}(a))^2/4$.}
    \label{fig:mws}
    \vspace{.5cm}
    \resizebox{0.83\textwidth}{!}{
    \begin{tikzpicture}
    \tikzstyle{every node}=[font=\tiny]
    \draw[<->](0,4) -- (0,0) -- (8,0);
    \draw[teal, domain=1:7] plot (\x, {sqrt(\x)});
    \draw[teal, dotted, domain=0:1] plot (\x, {sqrt(\x)});
    \draw[blue, domain=1:2] plot(\x, {\x});
    \draw[blue, domain=2:4] plot (\x, {2});
    \draw[blue, domain=4:5] plot (\x, {0.5*\x});
    \draw[blue, domain=5:25/4]plot(\x, {5/2});
    \draw[blue, domain=25/4:13/2]plot(\x, {(2/5)*\x});
    \draw[dotted, domain=0:1]plot(\x,{1});
    \draw[dotted, domain=0:2]plot(\x,{2});
    \draw[dotted, domain=0:5]plot(\x,{5/2});
    \draw[dotted] (1,0) -- (1,1);
    \draw[dotted] (4,0) -- (4,2);
    \draw[dotted] (5,0) -- (5,5/2);
    \draw[dotted] (25/4,0) -- (25/4,5/2);
    \draw[red, domain=1:2] plot (\x, {0.7*\x+0.3});
    \draw[red, dotted, domain=2:2.3] plot (\x, {2*sqrt(\x)-1.13});
    \node at (1,-0.3) {1};
    \node at (-0.3, 1) {1};
    \node at (-0.3, 2) {2};
    \node at (-0.3, 5/2) {$\frac{5}{2}$};
    \node at (4, -0.3) {4};
    \node at (5, -0.3) {5};
    \node at (25/4, -0.3) {$\frac{25}{4}$};
    \node at (8, -0.3) {$a$};
    \node[blue] at (5, 3) {$c^B(\mathbf{E}(a))$};
    \node[teal] at (7 ,2) {$\overline{\mathrm{Vol}}(\mathbf{E}(a))=\sqrt{a}$};
    \node[red] at (3.5, 1) {$\displaystyle\frac{\mathrm{M}(\mathbf{E}(a))^2}{4}$};
\end{tikzpicture}
}
\end{figure}
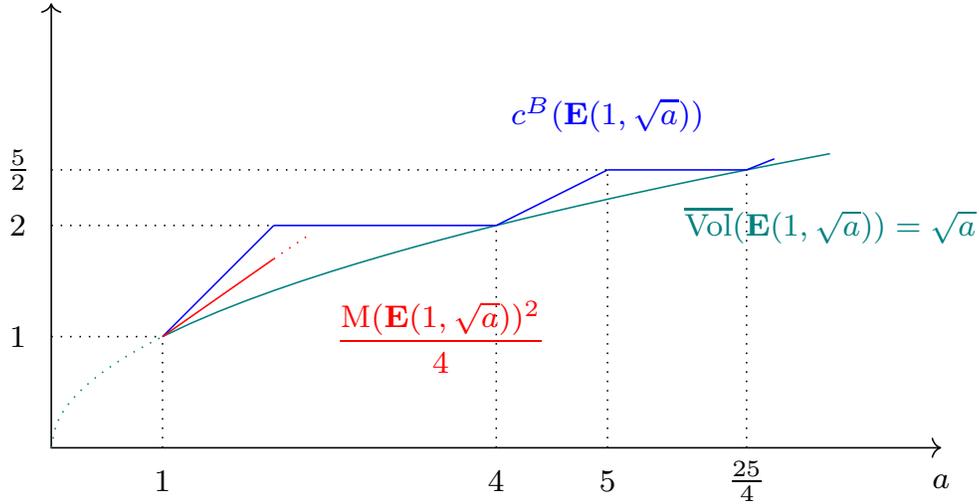

\subsection{Organization}
The proof of Theorem \ref{variation} is contained in Section \ref{sec:variation} and the proof of Theorem \ref{linear} is contained in Section \ref{sec:linear}. In both cases, it sufficed to consider toric convex domains whose boundary is smooth and strictly convex.  Section \ref{sec:ellipsoid} contains the proof of Proposition \ref{ellipsoid} concerning ellipsoids in standard position. Propositions \ref{lag} and \ref{vr} concerning the Lagrangian bidisk are proved in Section \ref{lag}. 

\section{Proof of Theorem \ref{variation}}\label{sec:variation}

\subsection{Simplifications}

We will first prove Theorem \ref{variation} for strictly convex bodies whose boundaries are smooth. Let $K$ be a convex body with these properties. The support function of $K$ can then be given by an explicit, and very useful, formula. In particular, the Gauss map of $K$,
$$\nu_K \colon \partial K \to S^{d-1},$$
which takes $u \in \partial K$ to its outward unit normal vector, is a diffeomorphism and we have
\begin{align*}
    h_K(u)=\langle u, \nu_K^{-1}(u)\rangle.
\end{align*}

We can also restrict our attention to paths of symplectomorphisms generated by autonomous Hamiltonian vector fields. In particular, it suffices to show that for any smooth and  compactly supported function $H\colon \RR^{2n} \to \RR$, we have
\begin{align*}
   \left.\frac{d}{dt}\right|_{t=0} M(\phi^t_H(K)) = 0
\end{align*}
where $\phi^t_H$ is the flow of the Hamiltonian vector field $X_H$ defined by $$\omega_{2n}(X_H(z), \cdot) = dH(z)[\cdot],$$ and $\omega_{2n}$ is the standard symplectic structure on $\RR^{2n}$.

Fixing  a function $H$ as above, there is an $\epsilon>0$ such that the images $K_t=\phi^t_H(K)$ are strictly convex for all $|t|\leq \epsilon$. Restricting to $t \in [-\epsilon, \epsilon]$, we then have 
$$M(K_t) = 2 \int_{S^{2n-1}} \langle u,\nu^{-1}_{K_t}(u)\rangle \, d \sigma.$$ To prove Theorem \ref{variation}, it now suffices to show that 
\begin{equation}\label{critneed}
\int_{S^{2n-1}} \left\langle u, \left.\frac{d}{dt}\right|_{t=0} \nu^{-1}_{K_t}(u)\right\rangle \, d \sigma =0.
\end{equation}

\subsection{Coordinate expressions} To prove \eqref{critneed}, we must take advantage of the assumption that $K$ is toric. Define the Hopf coordinates $(\theta_1, \cdots, \theta_n, r_1, \cdots, r_n)$ on $\RR^{2n}$ by
\begin{align*}
    x_i = r_i \cos{\theta_i}\,\,\,\text{and}\,\,\, y_i = r_i \sin{\theta_i} \,\,\,\text{for}\,\,\, i = 1, \cdots, n.
\end{align*}
Consider also the coordinate functions $(\theta,r,\rho)=(\theta_1, \cdots, \theta_n, r_1, \cdots, r_{n-1}, \rho)$ where
\begin{align*}
   \rho = \sqrt{r_1^2 + \cdots + r_n^2}.
\end{align*}
The unit sphere $S^{2n-1}\subset \RR^{2n}$ corresponds to $\rho=1$ and $$(\theta, r)=(\theta_1, \cdots, \theta_n, r_1, \cdots, r_{n-1})$$ are coordinate functions on $S^{2n-1}$ where the points $r =(r_1, \dots, r_{n-1})$ belong to the 
region $$\displaystyle{R=\left\{ \sum_{j=1}^{n-1} r_j^2 \leq 1\right\} \subset \RR^{n-1}_{\geq0}}.$$
Integration of a function $F$ over $S^{2n-1}$, in these coordinates, then becomes
\begin{align*}
    \int_{S^{2n-1}} F(u) \, d \sigma = \frac{(n-1)!}{2 \pi^n}\int_{R \times T^n} F(u(\theta,r))r_1 \cdots r_{n-1}d r_1 \cdots d r_{n-1} d \theta_1 \cdots d \theta_n
\end{align*}
where $T^n$ is the standard $n$-torus, $(\RR/2\pi\RR)^n$.  

Each convex domain $K \subset \RR^{2n}$, with the origin in its interior, is defined by an equation of the form $$\{\rho = f(\theta, r) \mid (r, \theta) \in R \times T^n\}.$$ The convex body $K$ is toric if and only if $f$ does not depend on $\theta$.

\begin{definition}
A smooth function $J \colon S^{2n-1} \to \RR$ is said to be $\theta$-{\em simple} if 
\begin{align*}
J(\theta,r) =& \sum_{i=1}^\ell A_i(r)\frac{\partial J_i}{\partial \theta_{\zeta(i)}}(\theta,r).
\end{align*}
where the functions $A_i(r)$ and $J_i(\theta,r)$ are smooth and $\zeta$ is a map from $\{1, \dots, \ell\}$ to $\{1, \dots, n\}$.
\end{definition}

\begin{remark}\label{simple} Note that $J$ and $J'$ are $\theta$-simple, then so is $jJ+J'$ for any smooth function $j\colon R \to \RR.$ \end{remark}

The point of this definition is the following simple observation.

\begin{lemma}\label{simple0}
If the smooth function  $J \colon S^{2n-1} \to \RR$ is $\theta$-simple, then 
$$\int_{S^{2n-1}} J \, d\sigma =0.$$
\end{lemma}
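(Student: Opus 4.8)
The plan is to exploit the integration formula for $S^{2n-1}$ in Hopf coordinates established in Section~\ref{sec:ellipsoid}, namely
\begin{align*}
\int_{S^{2n-1}} F(u)\, d\sigma = \frac{(n-1)!}{2\pi^n} \int_{R\times T^n} F(u(\theta,r))\, r_1\cdots r_{n-1}\, dr_1\cdots dr_{n-1}\, d\theta_1\cdots d\theta_n,
\end{align*}
together with Remark~\ref{simple} (which says the class of $\theta$-simple functions is closed under multiplication by functions of $r$ alone). By linearity of the integral and Remark~\ref{simple}, it suffices to treat a single summand $J(\theta,r) = A(r)\,\frac{\partial J_1}{\partial \theta_{\zeta(1)}}(\theta,r)$, since the general $\theta$-simple $J$ is a finite sum of such terms and the $r$-dependent weight $r_1\cdots r_{n-1}$ in the Fubini formula can be absorbed into the $A(r)$ factor without affecting $\theta$-simplicity.

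Next I would apply Fubini's theorem to the single-summand integral, writing
\begin{align*}
\int_{S^{2n-1}} J\, d\sigma = \frac{(n-1)!}{2\pi^n}\int_R A(r)\, r_1\cdots r_{n-1}\left( \int_{T^n} \frac{\partial J_1}{\partial \theta_{\zeta(1)}}(\theta,r)\, d\theta_1\cdots d\theta_n \right) dr_1\cdots dr_{n-1}.
\end{align*}
The inner integral over $T^n$ factors as an iterated integral; performing the integration in the $\theta_{\zeta(1)}$ variable first (for fixed values of the remaining angular variables and of $r$), the fundamental theorem of calculus gives $\int_0^{2\pi} \frac{\partial J_1}{\partial \theta_{\zeta(1)}}\, d\theta_{\zeta(1)} = J_1\big|_{\theta_{\zeta(1)}=2\pi} - J_1\big|_{\theta_{\zeta(1)}=0}$, which vanishes because $J_1$, being the pullback of a smooth function on $S^{2n-1}$ via the Hopf parametrization, is $2\pi$-periodic in each angular variable. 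Hence the inner $T^n$-integral is identically zero, and therefore so is $\int_{S^{2n-1}} J\, d\sigma$.

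There is essentially no hard step here; the only point requiring a word of care is the justification that the Hopf-coordinate representatives $A_i$ and $J_i$ (and hence the products with $r_1\cdots r_{n-1}$) are regular enough to apply Fubini and the fundamental theorem of calculus on the (possibly boundary-containing) region $R\times T^n$. Since $J$ and the $J_i$ are smooth on the compact manifold $S^{2n-1}$ and the Hopf coordinate map is smooth, all integrands are bounded and continuous, so Fubini applies without issue, and the periodicity of $J_i$ in each $\theta_j$ is immediate from the fact that $(\theta_1,\dots,\theta_n)\mapsto u(\theta,r)$ has period $2\pi$ in each $\theta_j$. This completes the proof.
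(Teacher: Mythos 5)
Your proof is correct and follows essentially the same route as the paper's: both reduce via the Hopf-coordinate integration formula to integrating $\partial J_i/\partial\theta_{\zeta(i)}$ against an $r$-only weight over $R\times T^n$, and both observe that the $\theta_{\zeta(i)}$-integral vanishes (you invoke the fundamental theorem of calculus plus $2\pi$-periodicity directly, while the paper phrases the same fact as an integration by parts whose transferred derivative hits an $r$-only factor). The two presentations are logically equivalent, and your version if anything makes the role of periodicity more explicit.
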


\begin{proof}
    Integration by parts yields

\begin{align*}
   & \int_{S^{2n-1}} J \, d\sigma\\=&\sum_{i=1}^{\ell}\int_{R \times T^n} A_i(r)\frac{\partial J_i}{\partial \theta_{\zeta(i)}}(\theta,r)\,r_1 \cdots r_{n-1}d r_1 \cdots d r_{n-1} d \theta_1 \cdots d \theta_n \\ =& \sum_{i=1}^{\ell}\int_{R \times T^n} \frac{\partial}{\partial \theta_{\zeta(i)}}\left(A_i(r)r_1 \cdots r_{n-1}\right) J_i(\theta,r)\,d r_1 \cdots d r_{n-1} d \theta_1 \cdots d \theta_n\\ =&0.
\end{align*}
\end{proof}
Given this, in order to prove \eqref{critneed}, it suffices to prove the following.

\begin{proposition}\label{form}
Let $K =\{\rho=f(r)\}$ be a strictly convex toric domain in $\RR^{2n}$ with smooth boundary. For any smooth function $H \colon \RR^{2n} \to \RR$ the function
\begin{align*}
\left\langle u, \left.\frac{d}{dt}\right|_{t=0} \nu^{-1}_{K_t}(u)\right\rangle
\end{align*}
is $\theta$-simple where $K_t =\phi^t_H(K)$.
\end{proposition}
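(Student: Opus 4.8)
The plan is to compute the integrand of \eqref{critneed} explicitly and then read off a $\theta$-simple presentation of it. \emph{First}, I would identify $\left.\frac{d}{dt}\right|_{t=0}\nu^{-1}_{K_t}(u)$ up to directions tangent to $\partial K$. Fix $u\in S^{2n-1}$, set $p(t)=\nu^{-1}_{K_t}(u)\in\partial K_t$ so that $p(0)=\nu^{-1}_K(u)$, and write $p(t)=\phi^t_H(q(t))$ with $q(t)=(\phi^t_H)^{-1}(p(t))\in\partial K$. Differentiating at $t=0$ gives $\dot p(0)=X_H(p(0))+\dot q(0)$, and $\dot q(0)\in T_{p(0)}\partial K$ since $q(t)$ stays on $\partial K$ and $q(0)=p(0)$. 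Because $u$ is the outer unit normal to $\partial K$ at $p(0)$, pairing with $u$ annihilates the tangential term, so
\[
\left\langle u,\left.\frac{d}{dt}\right|_{t=0}\nu^{-1}_{K_t}(u)\right\rangle=\bigl\langle u,\,X_H(\nu^{-1}_K(u))\bigr\rangle,
\]
and it suffices to show the right-hand side, as a function of $u\in S^{2n-1}$, is $\theta$-simple.

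\emph{Second}, I would bring in the toric symmetry via the support function. For strictly convex $K$ one has $\nu^{-1}_K(u)=\nabla h_K(u)$ on $S^{2n-1}$, and since $K$ is toric, $h_K$ is $\mathrm{U}(n)$-invariant; in particular it is invariant under the torus action and under coordinatewise complex conjugation, hence of the form $h_K(u)=H_0(|u_1|^2,\dots,|u_n|^2)$ for a smooth function $H_0$. (Here one uses that $\nu_K$ being a diffeomorphism forces $\partial K$ to have positive Gauss curvature, equivalently $h_K\in C^\infty(\RR^{2n}\setminus\{0\})$ with Hessian positive definite on $u^\perp$, together with the fact that a smooth torus-invariant function is a smooth function of the $|z_j|^2$.) Differentiating, $q:=\nu^{-1}_K(u)=\nabla h_K(u)$ has $j$-th complex component $q_j=c_j\,u_j$ with $c_j=2\,\partial_{w_j}H_0(|u_1|^2,\dots,|u_n|^2)>0$. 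In the Hopf coordinates $(\theta,r)$ on $S^{2n-1}$, so that $u_j=r_j e^{i\theta_j}$, the factor $c_j=c_j(r)$ is a smooth positive function of $r$ alone and $q_j=c_j(r)\,r_j\,e^{i\theta_j}$; in particular $q$ carries the same angular coordinates $\theta_1,\dots,\theta_n$ as $u$.

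\emph{Third}, I would produce the $\theta$-simple expression. Set $J(\theta,r):=H\bigl(\nu^{-1}_K(u(\theta,r))\bigr)=H(q)$. Since $\partial_{\theta_j}q_k=c_k(r)\,\partial_{\theta_j}u_k=\delta_{jk}\,i\,q_j$, the chain rule gives $\frac{\partial J}{\partial\theta_j}(\theta,r)=\bigl\langle\nabla H(q),\,i q_j\,e_j\bigr\rangle=\frac{\partial H}{\partial\theta_j}(q)$, where $e_j$ is the $j$-th complex slot; while, writing the Hamiltonian vector field as $X_H=(\partial_{y_1}H,\dots,\partial_{y_n}H,-\partial_{x_1}H,\dots,-\partial_{x_n}H)$ and using $u_j=c_j(r)^{-1}q_j$,
\[
\bigl\langle u,\,X_H(q)\bigr\rangle=\sum_{j=1}^n\bigl\langle u_j,\,(X_H(q))_j\bigr\rangle=\sum_{j=1}^n\frac{1}{c_j(r)}\,\frac{\partial H}{\partial\theta_j}(q)=\sum_{j=1}^n\frac{1}{c_j(r)}\,\frac{\partial J}{\partial\theta_j}(\theta,r).
\]
This is exactly a $\theta$-simple presentation, with $\ell=n$, $A_j(r)=c_j(r)^{-1}$, $J_j=J$, and $\zeta(j)=j$. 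Combined with the first step and Lemma~\ref{simple0}, this would give \eqref{critneed}.

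\emph{Where the work is.} The only non-formal point is the regularity input in the second step: that $h_K$ is genuinely a smooth function of $|u_1|^2,\dots,|u_n|^2$, with $\partial_{w_j}H_0>0$, so that the coefficients $A_j=c_j^{-1}$ are smooth on $R$. This rests on $\partial K$ having positive Gauss curvature (which is what the assumption that $\nu_K$ is a diffeomorphism provides) together with the classical correspondence between the smoothness of $\partial K$ and that of $h_K$ (see \cite{schn}) and the elementary structure of torus-invariant functions; with merely ``strictly convex with smooth boundary'' one would instead only get smoothness of $c_j$, and of $J$, on the full-measure open subset of $S^{2n-1}$ where all $u_j\neq 0$, which already suffices for the integral identity \eqref{critneed}. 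A minor, routine point is the smooth dependence of $\nu^{-1}_{K_t}(u)$ on $t$ used tacitly in the first step, which follows from the implicit function theorem applied to $\nu_{K_t}(p)=u$, strict convexity of $K_t$ (hence invertibility of $d\nu_{K_t}$) being an open condition in $t$.
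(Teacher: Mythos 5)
Your proof is correct and takes a substantially different, and considerably shorter, route than the paper. The paper works entirely with the graph parametrization $\rho=f_t(\theta,r)$ of $\partial K_t$, introduces the auxiliary diffeomorphisms $w_{K_t}$ of $S^{2n-1}$, applies the product rule to $\nu^{-1}_{K_t}(u)=f_t(w^{-1}_{K_t}(u))\,w^{-1}_{K_t}(u)$ to obtain the three-term decomposition \eqref{master}, and then verifies term by term that each summand is $\theta$-simple via a chain of explicit coordinate lemmas (Lemmas \ref{f0}--\ref{nail}). Your argument bypasses essentially all of this. The key observation in your first step is the classical first-variation-of-support-function fact: writing $p(t)=\nu^{-1}_{K_t}(u)=\phi^t_H(q(t))$ with $q(t)\in\partial K$, the tangential contribution $\dot q(0)$ is annihilated by pairing with the normal $u$, so the integrand reduces to $\langle u, X_H(\nu^{-1}_K(u))\rangle$. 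This is a genuine conceptual simplification: it shows that the quantity of interest is really $\frac{d}{dt}\big|_{t=0}h_{K_t}(u)$, and isolates the dependence on $H$ to a single evaluation of $X_H$ along $\partial K$. Your second and third steps then exploit the toric structure cleanly: since $h_K$ is $T^n$-invariant, $\nabla h_K(u)$ has $j$-th complex component $c_j(r)u_j$ with $c_j$ a function of $r$ alone, and a short chain-rule computation shows $\langle u_j, X_H(q)_j\rangle = c_j(r)^{-1}\partial_{\theta_j}\big(H\circ\nu^{-1}_K\big)(\theta,r)$, which exhibits the integrand as $\theta$-simple with $\ell=n$, $A_j=c_j^{-1}$, $J_j=H\circ\nu^{-1}_K$, $\zeta(j)=j$. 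You correctly flag the one genuine regularity point (smoothness of $c_j$ near $\{r_j=0\}$) and correctly observe that working on the full-measure open set where all $u_j\neq 0$ already suffices for Lemma \ref{simple0} to apply. What your approach buys is brevity and transparency; what the paper's more explicit computation buys is that it handles the functions $f_t$, $w_{K_t}$, $v_t$ in a form directly adapted to the coordinate conventions established earlier in Section \ref{sec:ellipsoid}, at the cost of a much longer verification.
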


\subsection{Proof of Proposition \ref{form}}
For all sufficiently small $|t|>0$, the boundaries of the domains $K_t =\phi^t_H(K)$ are strictly convex and their (smooth) boundaries are given by \begin{align*}
\partial K_t = \{\rho = f_t(\theta,r)\} = \{f_t(u)u \in \RR^{2n} \mid u \in S^{2n-1}\} 
\end{align*} for the smooth family of functions $f_t \colon S^{2n-1} \to \RR$ defined by $\rho(t) =f_t (\theta(t), r(t))$ where $(\theta(t),r(t),\rho(t)) =\phi^t_H(\theta,r,\rho)$.

The Gauss map of $K_t$ is the diffeomorphism from $\partial K_t$ to $S^{2n-1}$  given by
\begin{align*}
    \nu_{K_t}(f_t(u)u) = \left(\frac{\nabla(\rho - f_t)}{\|\nabla(\rho - f_t))\|}\right) (f_t(u)u).
\end{align*}
Here, $\nabla f_t$ is viewed as the gradient vector field, along $\partial K_t$, of the $\rho$ -independent function $f_t= f_t(\theta,r)$ defined near $\partial K_t$. It is also implicitly understood that the resulting unit vector, which in the formula formally lies in the tangent space $T_{f_t(u)u}\RR^{2n}$, has been identified in the obvious way with a point in $S^{2n-1}.$  

It will also be useful to consider the related diffeomorphisms of $S^{2n-1}$, 
\begin{equation}\label{w}
    w_{K_t}(u) = \left(\frac{\nabla(\rho - f_t)}{\|\nabla(\rho - f_t))\|}\right) (f_t(u)u).
\end{equation}
In particular, we have
\begin{align}\label{split}
 \nu_{K_t}^{-1}(u) = f_t(w_{K_t}^{-1}(u))w_{K_t}^{-1}(u). 
\end{align}
In terms of the coordinates $(\theta,r)$ on $S^{2n-1}$, the diffeomorphism $w_K$ has the form $w_K(\theta,r)= (\theta, G(r))$ where $G$ is a bijection of $R$. Hence, $w^{-1}_K(\theta,r)= (\theta, F(r))$ for $F=G^{-1}$.
It follows from Equation \eqref{split} that 
\begin{align}\label{master}
  \left\langle u, \left.\frac{d}{dt}\right|_{t-0} \nu^{-1}_{K_t}(u)\right\rangle =& \,f_0(w_K^{-1}(u))\left\langle u, \left.\frac{d}{dt}\right|_{t=0} w^{-1}_{K_t}(u)\right\rangle  +\dot{f}_0(w_{K}^{-1}(u))\langle u, w_K^{-1}(u)\rangle\\ &\nonumber + df_0 (w_{K}^{-1}(u)) \left[  \left.\frac{d}{dt}\right|_{t=0} w^{-1}_{K_t}(u)\right] \langle u, w_K^{-1}(u)\rangle, 
\end{align}
where $f_0=f$ and $\dot{f}_0(u) = \left.\frac{d}{dt}\right|_{t=0} f_t(u)$. It suffices to show that each of the three summands on the right side of \eqref{master} is $\theta$-simple. With this in mind, we now analyze their constituents starting with the simple observation.
\begin{lemma}\label{f0}
For all $u =(\theta,r) \in S^{2n-1}$, we have 
\begin{align*}
 f_0(w_K^{-1}(u)) = f_0(F(r)).   
\end{align*}
\end{lemma}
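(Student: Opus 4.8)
The statement is essentially immediate from the description of $w_K$ recorded just above, so the plan is mainly to pin down the (mild) abuse of notation on the right-hand side.

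First I would recall, or re-derive, why $w_K$ leaves the angular coordinates fixed. Since $K=\{\rho=f(r)\}$ is toric, the defining function $\rho-f$ of $\partial K$ does not depend on $\theta=(\theta_1,\dots,\theta_n)$, and in the coordinates $(\theta,r,\rho)$ the Euclidean metric is block diagonal with respect to the splitting of the tangent space into the $\theta$-directions and the $(r,\rho)$-directions. Hence $\nabla(\rho-f)$ has no $\theta$-component, so the outward unit normal at a boundary point $f(u)u$, and therefore the point of $S^{2n-1}$ it determines under the usual identification, carries the same angular coordinates as $u$. This is precisely the identity $w_K(\theta,r)=(\theta,G(r))$ for a bijection $G$ of $R$ that has already been stated, whence $w_K^{-1}(\theta,r)=(\theta,F(r))$ with $F=G^{-1}$.

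Then I would finish with a substitution. For $u=(\theta,r)\in S^{2n-1}$ we have $w_K^{-1}(u)=(\theta,F(r))$, so $f_0\bigl(w_K^{-1}(u)\bigr)=f_0(\theta,F(r))$. Since $K$ is toric, $f_0=f$ is independent of $\theta$; regarding $f_0$ as a function of the variable $r$ alone -- which is the meaning of the right-hand side of the claimed identity -- this gives $f_0(\theta,F(r))=f_0(F(r))$, as required.

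I do not anticipate any genuine obstacle: the lemma is a bookkeeping statement whose only content is that $w_K^{-1}$ acts trivially on $\theta$ and rescales only the $r$-variables, together with the fact that the toric hypothesis makes $f_0$ depend on $r$ alone. The one point deserving a word of care is the claim that $w_K$ fixes $\theta$ rather than translating it, but this is forced by the torus-invariance of $\partial K$ and the block form of the metric (equivalently, by the reflection symmetries $\theta_i\mapsto-\theta_i$ of a toric hypersurface), and it has already been argued in the text preceding the lemma.
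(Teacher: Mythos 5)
Your proof is correct and takes essentially the same route as the paper: the paper omits the proof entirely, relying on the two facts it has already recorded — that $w_K^{-1}(\theta,r)=(\theta,F(r))$ and that the toric hypothesis makes $f_0$ independent of $\theta$ — and the lemma then follows by the substitution you give. Your additional justification that $w_K$ fixes $\theta$ (via the block-diagonal form of the Euclidean metric in the coordinates $(\theta,r,\rho)$ and the $\theta$-independence of the defining function $\rho-f$) is a clean rederivation of an assertion the paper states without argument, and it is consistent with the explicit coordinate formula for $\nabla(\rho-f_t)$ that appears later in the section, where one can see directly that, when $f_0$ has no $\theta$-dependence, the gradient vector has no $\partial/\partial\theta_i$ component in the Hopf frame.
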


Writing $u \in S^{2n-1} \subset R^{2n}$ in Cartesian coordinates as $$u=(x_1, \dots, x_n,y_1, \dots, y_n)^T$$ we also have 
$$w^{-1}_K(u) = \mathrm{diag}(a_1(r),\dots,a_n(r),a_1(r),\dots,a_n(r)) u$$ for nonnegative functions $a_j$ on $S^{2n-1}$ which do not depend on $\theta$ and which satisfy \begin{align}\label{aj}\sum_{j=1}^{n-1} r_j^2 a_j^2(r)  + \left(1 -\sum_{j=1}^{n-1} r_j^2\right)a_n^2(r)=1.\end{align} This implies 
\begin{lemma}\label{2b}
For all $u =(\theta,r) \in S^{2n-1}$, we have 
\begin{align*}
    \left\langle u,w_K^{-1}(u) \right\rangle = \sum_{i=1}^{n-1} r_i^2 a_i(r) + \left(1-\sum_{i=1}^{n-1}r_i^2\right)a_n(r)
\end{align*}
for smooth functions $a_i \colon R \to \RR_{\geq 0}$ that satisfy \eqref{aj}.
\end{lemma}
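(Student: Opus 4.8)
The plan is to read the formula off from the diagonal representation of $w_K^{-1}$ recorded in the paragraph preceding the statement, and then to supply the one point that genuinely needs an argument, namely the smoothness of the coefficient functions on $R$.

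First I would unwind the structure of $w_K^{-1}$. Since $w_K(\theta,r) = (\theta,G(r))$ for a bijection $G$ of $R$, we have $w_K^{-1}(\theta,r) = (\theta,F(r))$ with $F = G^{-1}$; writing $F(r) = (F_1(r),\dots,F_{n-1}(r))$ and $F_n(r) = \bigl(1 - \sum_{j<n} F_j(r)^2\bigr)^{1/2}$, and recalling $r_n = \bigl(1-\sum_{j<n} r_j^2\bigr)^{1/2}$ on $S^{2n-1}$, the map $w_K^{-1}$ fixes every angle $\theta_i$ and scales the $i$-th complex coordinate by $a_i(r) := F_i(r)/r_i$. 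This is precisely the identity $w_K^{-1}(u) = \mathrm{diag}(a_1(r),\dots,a_n(r),a_1(r),\dots,a_n(r))\,u$, and $\sum_{j<n} r_j^2 a_j(r)^2 + r_n^2 a_n(r)^2 = \sum_{j=1}^n F_j(r)^2 = 1$ is exactly \eqref{aj}. Taking the inner product with $u$ and using $x_i^2 + y_i^2 = r_i^2$ in Hopf coordinates gives
\[
\langle u, w_K^{-1}(u)\rangle = \sum_{i=1}^n a_i(r)(x_i^2+y_i^2) = \sum_{i=1}^n a_i(r)\,r_i^2,
\]
and substituting $r_n^2 = 1 - \sum_{i=1}^{n-1} r_i^2$ into the last summand produces the asserted expression, with coefficients nonnegative and satisfying \eqref{aj} by construction.

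The step that deserves care --- and the only real obstacle --- is that $a_i$ is smooth on all of $R$, including along the strata where some $r_j = 0$ (where the torus fibers degenerate and the quotient $F_i(r)/r_i$ looks singular), rather than merely on the open dense set where all coordinates are nonzero. I would deduce this from the smoothness of $w_K$ as a diffeomorphism of $S^{2n-1}$: near a point of the sphere where $z_{i_0} = 0$ but the remaining coordinates are nonzero, smoothness of $w_K^{-1}$ forces its $z_{i_0}$-component, which has the form $z_{i_0}\mapsto \bigl(F_{i_0}(r)/|z_{i_0}|\bigr) z_{i_0}$, to extend smoothly through $z_{i_0} = 0$; by the standard description of rotation-equivariant smooth maps of $\CC$ this means $a_{i_0}(r) = F_{i_0}(r)/r_{i_0}$ is a smooth function of $r_{i_0}^2$, hence smooth on $R$, and the same argument at points with $z_n = 0$ handles $a_n$. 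The substitution $r_n^2 = 1 - \sum_{j<n} r_j^2$ in the last line is harmless since only $r_n^2$, never $r_n$ itself, appears in the final formula. With smoothness and \eqref{aj} in hand, the lemma follows.
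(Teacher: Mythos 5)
Your proof is correct and takes essentially the same route as the paper, which simply asserts the diagonal representation $w_K^{-1}(u) = \mathrm{diag}(a_1(r),\dots,a_n(r),a_1(r),\dots,a_n(r))\,u$ in the paragraph preceding the lemma, takes the inner product with $u$, and substitutes $r_n^2 = 1 - \sum_{i<n} r_i^2$. You additionally supply the equivariance argument for smoothness of the $a_i = F_i/r_i$ across the strata where some $r_i=0$, a point the paper leaves implicit; this addition is correct and worth having.
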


We also have 
\begin{lemma}\label{D}
\begin{align*}
D w_K^{-1}(u)\left[\frac{\partial}{\partial x_i}\right] = \begin{cases}
    \displaystyle\cos{\theta_i} \sum_{j=1}^n r_j \cos{\theta_j}\frac{\partial a_j}{\partial r_i}\frac{\partial }{\partial x_j} + \cos{\theta_i} \sum_{j=1}^n r_j \sin{\theta_j}\frac{\partial a_j}{\partial r_i}\frac{\partial }{\partial y_j} +a_i(r) \frac{\partial}{\partial x_i}\,\,\text{for}\,\,i \neq n\\
    \displaystyle a_n(r)\frac{\partial}{\partial x_n} \,\,\text{for}\,\,i=n
\end{cases}
\end{align*}
and
\begin{align*}
D w_K^{-1}(u)\left[\frac{\partial}{\partial y_i}\right] =
    \begin{cases}
    \displaystyle\sin{\theta_i} \sum_{j=1}^n r_j \cos{\theta_j}\frac{\partial a_j}{\partial r_i}\frac{\partial }{\partial x_j} + \sin{\theta_i} \sum_{j=1}^n r_j \sin{\theta_j}\frac{\partial a_j}{\partial r_i}\frac{\partial }{\partial y_j}+ a_i(r) \frac{\partial}{\partial y_i}\,\,\text{for}\,\,i \neq n\\
        \displaystyle a_n(r)\frac{\partial}{\partial y_n} \,\,\text{for}\,\,i=n.
    \end{cases}
\end{align*}
\end{lemma}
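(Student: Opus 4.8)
The claimed identities follow by a direct chain-rule computation from the explicit presentation of $w_K^{-1}$ recorded just above the lemma, namely
\begin{equation*}
w_K^{-1}(u) = \mathrm{diag}\bigl(a_1(r),\dots,a_n(r),a_1(r),\dots,a_n(r)\bigr)\,u,
\end{equation*}
so that the $j$-th pair of Cartesian components of $w_K^{-1}(u)$ is $(a_j(r)x_j,\,a_j(r)y_j)$. The plan is to extend $w_K^{-1}$ to the ambient map $u\mapsto \mathrm{diag}(a_1(r),\dots,a_n(r),a_1(r),\dots,a_n(r))\,u$ defined for $u$ in a neighborhood of $S^{2n-1}$ in $\RR^{2n}$, where now $r_i=\sqrt{x_i^2+y_i^2}$ are regarded as smooth functions on $\RR^{2n}$ (away from the codimension-two loci $r_i=0$) and each $a_j$ is a function of $r_1,\dots,r_{n-1}$ alone; in particular no $a_j$ depends on the variables $x_n$ or $y_n$. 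With this extension, $Dw_K^{-1}(u)$ acts on the ambient coordinate fields $\partial/\partial x_i,\partial/\partial y_i$ and the formulas make sense on the open dense subset of $S^{2n-1}$ where the Hopf angles $\theta_i$ are defined.

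First I would record the elementary identities $\partial r_i/\partial x_i = x_i/r_i = \cos\theta_i$ and $\partial r_i/\partial y_i = y_i/r_i = \sin\theta_i$ for $i\neq n$, together with $\partial r_k/\partial x_i=\partial r_k/\partial y_i=0$ for $k\neq i$. For $i\neq n$ and each $j$, differentiating the component $a_j(r)x_j$ with respect to $x_i$ gives $\frac{\partial a_j}{\partial r_i}\cos\theta_i\,x_j + a_j\delta_{ij}$, while differentiating $a_j(r)y_j$ with respect to $x_i$ gives $\frac{\partial a_j}{\partial r_i}\cos\theta_i\,y_j$. Substituting $x_j=r_j\cos\theta_j$ and $y_j=r_j\sin\theta_j$, summing over $j$, and collecting the $\delta_{ij}$-term into the single contribution $a_i(r)\,\partial/\partial x_i$ produces exactly the stated expression for $Dw_K^{-1}(u)[\partial/\partial x_i]$. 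The computation of $Dw_K^{-1}(u)[\partial/\partial y_i]$ for $i\neq n$ is word-for-word the same, with $\partial r_i/\partial y_i=\sin\theta_i$ replacing $\cos\theta_i$ throughout.

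Finally, the case $i=n$ is immediate from the fact that no $a_j$ depends on $x_n$ or $y_n$: the only nonzero partial derivative with respect to $x_n$ (resp.\ $y_n$) among the components $a_j(r)x_j$ (resp.\ $a_j(r)y_j$) is the one with $j=n$, coming from the explicit factor $x_n$ (resp.\ $y_n$), and it contributes $a_n(r)$. Hence $Dw_K^{-1}(u)[\partial/\partial x_n]=a_n(r)\,\partial/\partial x_n$ and $Dw_K^{-1}(u)[\partial/\partial y_n]=a_n(r)\,\partial/\partial y_n$. There is no genuine obstacle in this argument; the only point demanding care is the asymmetric role of the index $n$, which is a consequence of parametrizing $S^{2n-1}$ by $(\theta,r)$ with $r_n$ a dependent quantity, so that each $a_j$ is treated as a function of $r_1,\dots,r_{n-1}$ only. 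One should also note, for later use in the integration-by-parts argument of Lemma~\ref{simple0}, that these formulas are only needed on the full-measure set where all $r_i>0$.
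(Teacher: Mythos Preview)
Your proof is correct and is precisely the direct chain-rule computation the paper leaves implicit (the lemma is stated without proof in the paper). The only thing worth noting is that your extension of $w_K^{-1}$ off the sphere, with each $a_j$ treated as a function of $r_1,\dots,r_{n-1}$ alone, is exactly the convention the paper is using, so the asymmetric treatment of the index $n$ is correct.
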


Since the functions $f_t$ define the boundaries $\partial K_t = \phi^t_H(\partial K)$ by  $$\{\rho(t) = f_t(\theta(t), r(t))\}$$ where $(\theta(t), r(t), \rho(t))=\phi_H^t(\theta, r, \rho),$
 we have
\begin{align}\label{f.0}
    \dot{\rho}(0) = \dot{f_0}(\theta(0), r(0)) + (df_0)_{(\theta(0), (0))} [\dot\theta(0)] +(df_0)_{(\theta(0), r(0))} [\dot{r}(0)].
\end{align}
Using the formula for the Hamiltonian vector field $X_H$ of $H$,
\begin{align*}
    X_H =\, \sum_{i=1}^{n-1} \left( - \frac{1}{r_i} \frac{\partial H}{\partial r_i} - \frac{1}{\rho} \frac{\partial H}{\partial \rho}\right) \frac{\partial}{\partial \theta_i}-\frac{1}{\rho}\frac{\partial H}{\partial \rho} \frac{\partial}{\partial \theta_n}  + \sum_{i=1}^{n-1}\frac{1}{r_i}\frac{\partial H}{\partial \theta_j} \frac{\partial}{\partial r_i} +\frac{1}{\rho}\left( \sum_{j=1}^n \frac{\partial H}{\partial \theta_j}\right) \frac{\partial}{\partial \rho},
\end{align*}
equation \eqref{f.0} yields 
\begin{align*}
    \dot{f_0} =\frac{1}{f_0}\left( \sum_{j=1}^n \frac{\partial H}{\partial \theta_j}\right) + 
    \sum_{i=1}^{n-1} \left(\frac{1}{r_i} \frac{\partial H}{\partial r_i} + \frac{1}{f_0} \frac{\partial H}{\partial \rho} \right)\frac{\partial f_0}{\partial \theta_i} +  \frac{1}{f_0} \frac{\partial H}{\partial \rho}\frac{\partial f_0}{\partial \theta_n}-\sum_{i=1}^{n-1}\frac{1}{r_i}\frac{\partial H}{\partial \theta_j} \frac{\partial f_0}{\partial r_i}.
\end{align*}
where $f_0$ and its derivatives are evaluated at $(\theta,r)$ and  $H$ and its partial derivatives are evaluated at $(\theta,r, f_0(r))$.
Since $f_0(\theta,r) =f_0(r)$, this simplifies to 
\begin{align*}
    \dot{f_0} = \frac{1}{f_0}\left( \sum_{j=1}^n \frac{\partial H}{\partial \theta_j}\right)-\sum_{i=1}^{n-1}\frac{1}{r_i}\frac{\partial f_0}{\partial r_i}\frac{\partial H}{\partial \theta_j},
\end{align*}
 and evaluating this at $w_K^{-1}(u)$, we get

\begin{lemma}\label{2a} For all $u =(\theta,r) \in S^{2n-1}$, we have 
    \begin{align*}
    \dot{f_0}(w_K^{-1}(u)) = \frac{1}{f_0(F(r))} \sum_{j=1}^n \frac{\partial H}{\partial \theta_j}(\theta,F(r), f_0(F(r)))-\sum_{i=1}^{n-1}\frac{1}{F_i(r)}\frac{\partial f_0}{\partial r_i}(F(r))\frac{\partial H}{\partial \theta_j}(\theta,F(r), f_0(F(r)))
    \end{align*}
    where $F(r)=(F_1(r), F_2(r), \dots, F_{n-1}(r))$.
\end{lemma}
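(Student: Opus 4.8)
The plan is to prove the formula by direct computation, following the chain of identities laid out just above the statement. The starting point is the defining relation for the boundaries of the deformed domains: a point of $\partial K_t$ obtained by applying $\phi^t_H$ to a boundary point of $K$ satisfies $\rho(t) = f_t(\theta(t),r(t))$, where $(\theta(t),r(t),\rho(t)) = \phi^t_H(\theta,r,\rho)$ and $\rho = f_0(r)$ on $\partial K$. First I would differentiate this identity at $t=0$. Since $t\mapsto(\theta(t),r(t),\rho(t))$ is an integral curve of $X_H$, the derivatives $\dot\theta(0),\dot r(0),\dot\rho(0)$ are exactly the components of $X_H$ at the point $(\theta,r,f_0(r))$, and the chain rule gives equation \eqref{f.0}, a single scalar linear equation whose only unknown is $\dot f_0$.

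Next I would substitute the coordinate expression for $X_H$. The one genuinely computational ingredient is writing the Hamiltonian vector field of $H$ in the mixed Hopf coordinates $(\theta_1,\dots,\theta_n,r_1,\dots,r_{n-1},\rho)$: one expresses the standard symplectic form $\omega_{2n} = \sum_i dx_i\wedge dy_i = \sum_i r_i\, dr_i\wedge d\theta_i$ in these coordinates, using $r_n^2 = \rho^2 - \sum_{j<n} r_j^2$, and inverts it to read off the displayed formula for $X_H$. Feeding the components $\dot\theta_i(0)$, $\dot r_i(0)$, $\dot\rho(0)$ of $X_H$ into \eqref{f.0} and solving for $\dot f_0$ yields the general (not yet toric) formula for $\dot f_0$ recorded in the text, in which $f_0$ together with its $\theta$- and $r$-derivatives appears, all evaluated at $(\theta,r)$, while $H$ and its partials are evaluated at $(\theta,r,f_0(r))$.

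Finally I would invoke the toric hypothesis. Since $K$ is toric, $f_0(\theta,r) = f_0(r)$, so every term containing a factor $\partial f_0/\partial\theta_i$ vanishes, collapsing the general formula to $\dot f_0 = \frac{1}{f_0}\sum_{j=1}^n \frac{\partial H}{\partial\theta_j} - \sum_{i=1}^{n-1}\frac{1}{r_i}\frac{\partial f_0}{\partial r_i}\frac{\partial H}{\partial\theta_j}$. To obtain the stated identity it then remains only to evaluate both sides at $w_K^{-1}(u)$. Here I would use the already established fact that $w_K^{-1}(\theta,r) = (\theta,F(r))$ preserves the angular coordinates and acts as $F$ on the radial ones; hence in the right-hand side each $r_i$ is replaced by $F_i(r)$, the factor $f_0$ becomes $f_0(F(r))$, and the partials of $H$ are evaluated at $(\theta,F(r),f_0(F(r)))$ — which is precisely the asserted formula.

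The main obstacle is the middle step: correctly deriving the coordinate expression for $X_H$ in the non-standard coordinate system $(\theta,r,\rho)$ and then solving \eqref{f.0} for $\dot f_0$ without sign or index errors, since $r_n$ is not an independent coordinate and the symplectic form acquires cross terms involving $d\rho$. Everything else — the chain-rule differentiation, the cancellation of the $\theta$-derivative terms under the toric hypothesis, and the final substitution $r\mapsto F(r)$ — is routine bookkeeping.
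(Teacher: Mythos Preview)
Your proposal is correct and follows essentially the same approach as the paper: the paper likewise differentiates the defining relation $\rho(t)=f_t(\theta(t),r(t))$ at $t=0$ to obtain equation \eqref{f.0}, substitutes the coordinate expression for $X_H$ in the $(\theta,r,\rho)$ coordinates, drops the $\partial f_0/\partial\theta_i$ terms using the toric hypothesis, and then evaluates at $w_K^{-1}(u)=(\theta,F(r))$. There is no substantive difference in strategy or in the order of steps.
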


It remains for us to analyze the coordinate expression of $$\left.\frac{d}{dt}\right|_{t=0} w^{-1}_{K_t}(u).$$
From the identity $w_{K_t}(w_{K_t}^{-1}(u))=u$, we get 
\begin{align*}
     D w_{K}(w_{K}^{-1}(u)) \left[\left.\frac{d}{dt}\right|_{t=0} w_{K_t}^{-1} (u) \right] + \dot{w}_{K} (w_{K}^{-1}(u)) =0
\end{align*}
where $\dot{w}_{K} (v) =\left.\frac{d}{dt}\right|_{t=0} w_{K_t} (v)$.
Hence, we have  
\begin{align}\label{dinverse}
 \left.\frac{d}{dt}\right|_{t=0} w_{K_t}^{-1} (u) = -D w_{K}^{-1} (u) \left[ \dot{w}_{K} (w_{K}^{-1}(u)) \right].
\end{align}

First, we consider $\dot{w}_{K}.$ 
Setting  $$v_t(u) = \nabla(\rho-f_t)(f_t(u)u)$$ and viewing this as a point in $\RR^{2n}$, it follows from equation \eqref{w} that
\begin{align}\label{w2v}
    \dot{w}_{K}(u) = \frac{\|v_0(u)\|^{2} \dot{v}_0(u) - \langle \dot{v}_0(u),v_0(u) \rangle v_0(u)}{\|v_0(u)\|^{3}}.
\end{align}
The gradient vector of a smooth function $F=F(\theta,r,\rho)$, written in terms of Cartesian tangent vectors, is given by 
\begin{align*}
    \nabla F(\theta, r, \rho) =& \sum_{i=1}^{n-1}\left(-\frac{1}{r_i}\frac{\partial F}{\partial \theta_i}\sin \theta_i + \left(\frac{\partial F}{\partial r_i} + \frac{r_i}{\rho}
    \frac{\partial F}{\partial \rho}\right)\cos \theta_i \right)\frac{\partial}{\partial x_i} \\ \nonumber&+ \left(-\frac{1}{r_n(r,\rho)}\frac{\partial F}{\partial \theta_n}\sin \theta_n  +  \frac{r_n(r,\rho)}{\rho} \frac{\partial F}{\partial \rho}\cos \theta_n \right)\frac{\partial}{\partial x_n}  \\ \nonumber&+ \sum_{i=1}^{n-1}\left(\frac{1}{r_i}\frac{\partial F}{\partial \theta_i}\cos \theta_i + \left(\frac{\partial F}{\partial r_i} + \frac{r_i}{\rho}
    \frac{\partial F}{\partial \rho}\right)\sin \theta_i \right)\frac{\partial}{\partial y_i} \\ \nonumber&+ \left(\frac{1}{r_n(r,\rho)}\frac{\partial F}{\partial \theta_n}\cos \theta_n  + \frac{r_n(r,\rho)}{\rho}  \frac{\partial F}{\partial \rho}\sin \theta_n \right)\frac{\partial}{\partial y_n}
\end{align*}
where $r_n(r,\rho) = \sqrt{\rho^2 - \sum_{j=1}^{n-1}r_j^2}$. 
Hence,
\begin{align*}
    v_t(\theta,r) =& \nabla(\rho-f_t)(\theta,r,f_t(\theta,r))\\ =&\sum_{i=1}^{n-1}\left(\frac{1}{r_i}\frac{\partial f_t}{\partial \theta_i}(\theta,r)\sin \theta_i + \left(-\frac{\partial f_t}{\partial r_i}(\theta,r) + \frac{r_i}{f_t(\theta,r)}
    \right)\cos \theta_i \right)\frac{\partial}{\partial x_i} \\&+ \left(\frac{1}{r_n(t,\theta,r)}\frac{\partial f_t}{\partial \theta_n}(\theta,r)\sin \theta_n  +  \frac{r_n(t,\theta,r)}{f_t(\theta,r)}\cos \theta_n \right)\frac{\partial}{\partial x_n}  \\&+ \sum_{i=1}^{n-1}\left(-\frac{1}{r_i}\frac{\partial f_t}{\partial \theta_i}(\theta,r)\cos \theta_i + \left(-\frac{\partial f_t}{\partial r_i}(\theta,r) + \frac{r_i}{f_t(\theta,r)}
    \right)\sin \theta_i \right) \frac{\partial}{\partial y_i} \\&+ \left(-\frac{1}{r_n(t,\theta,r)}\frac{\partial f_t}{\partial \theta_n}(\theta,r)\cos \theta_n  + \frac{r_n(t,\theta,r)}{f_t(\theta,r)}  \sin \theta_n \right)\frac{\partial}{\partial y_n}
\end{align*}
where $r_n(t,\theta,r) = \sqrt{f_t(\theta,r)^2 - \sum_{j=1}^{n-1}r_j^2}$. From this we derive
\begin{align*}
    v_0(\theta,r) =&\sum_{i=1}^{n-1} \left(-\frac{\partial f_0}{\partial r_i}(r) + \frac{r_i}{f_0(r)}
    \right)\cos \theta_i \,\frac{\partial}{\partial x_i} +   \frac{r_n(0,r)}{f_0(r)}\cos \theta_n \,\frac{\partial}{\partial x_n}  \\&+ \sum_{i=1}^{n-1} \left(-\frac{\partial f_0}{\partial r_i}(r) + \frac{r_i}{f_0(r)}
    \right)\sin \theta_i \, \frac{\partial}{\partial y_i} + \frac{r_n(0,r)}{f_0(r)}\sin \theta_n \,\frac{\partial}{\partial y_n},
\end{align*}
and
\begin{align*}
    \dot{v}_0(\theta,r) =& \sum_{i=1}^{n-1}\left(\frac{1}{r_i}\frac{\partial \dot{f}_0}{\partial \theta_i}(\theta,r)\sin \theta_i + \left(-\frac{\partial \dot{f}_0}{\partial r_i}(\theta,r) - \frac{r_i \dot{f}_0(\theta,r)}{f^2_0(r)}
    \right)\cos \theta_i \right)\frac{\partial}{\partial x_i} \\&+ \left(\frac{1}{r_n(0,r)}\frac{\partial \dot{f}_0}{\partial \theta_n}(\theta,r)\sin \theta_n  +  \frac{(\sum_{j=1}^{n-1}r_j^2)}{r_n(0,r)}\frac{\dot{f}_0(\theta,r)}{f^2_0(r)}\cos \theta_n \right)\frac{\partial}{\partial x_n}  \\&+ \sum_{i=1}^{n-1}\left(-\frac{1}{r_i}\frac{\partial \dot{f}_0}{\partial \theta_i}(\theta,r)\cos \theta_i + \left(-\frac{\partial \dot{f}_0}{\partial r_i}(\theta,r) - \frac{r_i \dot{f}_0(\theta,r)}{f^2_0(r)}
    \right)\sin \theta_i \right)\frac{\partial}{\partial y_i} \\&+  \left(-\frac{1}{r_n(0,r)}\frac{\partial \dot{f}_0}{\partial \theta_n}(\theta,r)\cos \theta_n  +  \frac{(\sum_{j=1}^{n-1}r_j^2)}{r_n(0,r)}\frac{\dot{f}_0(\theta,r)}{f^2_0(r)}\sin \theta_n \right)\frac{\partial}{\partial y_n}.
\end{align*}
A straight forward computation yields 
\begin{lemma}\label{normv}
\begin{align*}
    \displaystyle\|v_0(\theta,r)\|^2 \,= \sum_{i=1}^{n-1}\left(\frac{r_i}{f_0(r)}-\frac{\partial f_0}{\partial r_i}(r) \right)^2 + \frac{r_n(0,r)^2}{f_0(r)^2}. 
\end{align*}
In particular, $\|v_0(\theta,r)\|$ only depends on $r$. 
\end{lemma}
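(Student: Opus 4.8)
The plan is to prove Lemma \ref{normv} by a direct computation from the explicit coordinate expression for $v_0(\theta,r)$ derived just above. Recall that $v_0(\theta,r)$ is written as a linear combination of the Cartesian coordinate vector fields $\partial/\partial x_1,\dots,\partial/\partial x_n,\partial/\partial y_1,\dots,\partial/\partial y_n$, which form an orthonormal frame for the Euclidean metric on $\RR^{2n}$. Consequently $\|v_0(\theta,r)\|^2$ is simply the sum of the squares of the $2n$ coefficients appearing in that expansion, and there are no cross terms between distinct coordinate directions to account for.

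First I would collect, for each index $i$ with $1\le i\le n-1$, the coefficient of $\partial/\partial x_i$, namely $\bigl(\frac{r_i}{f_0(r)}-\frac{\partial f_0}{\partial r_i}(r)\bigr)\cos\theta_i$, together with the coefficient of $\partial/\partial y_i$, namely $\bigl(\frac{r_i}{f_0(r)}-\frac{\partial f_0}{\partial r_i}(r)\bigr)\sin\theta_i$. Squaring and adding these two contributions, the common amplitude $\bigl(\frac{r_i}{f_0(r)}-\frac{\partial f_0}{\partial r_i}(r)\bigr)^2$ factors out and the remaining factor is $\cos^2\theta_i+\sin^2\theta_i=1$, so the pair $(x_i,y_i)$ contributes exactly $\bigl(\frac{r_i}{f_0(r)}-\frac{\partial f_0}{\partial r_i}(r)\bigr)^2$. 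Next I would do the same for $i=n$: the coefficients of $\partial/\partial x_n$ and $\partial/\partial y_n$ are $\frac{r_n(0,r)}{f_0(r)}\cos\theta_n$ and $\frac{r_n(0,r)}{f_0(r)}\sin\theta_n$, whose squares add to $\frac{r_n(0,r)^2}{f_0(r)^2}$. Summing over all $i$ gives the asserted formula.

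The final clause of the lemma — that $\|v_0(\theta,r)\|$ depends only on $r$ — is then immediate, since every term in the resulting expression involves only $r$ (and $f_0$ is itself a function of $r$ alone because $K$ is toric). There is no serious obstacle here; the only point requiring care is to transcribe the somewhat lengthy formula for $v_0$ correctly and to observe that each $\theta_i$-dependence is confined entirely to the pair of factors $\cos\theta_i,\sin\theta_i$ multiplying a single $r$-dependent amplitude, so that it disappears upon passing to the norm.
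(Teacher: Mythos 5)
Your computation is exactly the ``straightforward computation'' the paper alludes to: expand $v_0(\theta,r)$ in the orthonormal Cartesian frame, pair each $\partial/\partial x_i$ coefficient with the corresponding $\partial/\partial y_i$ coefficient, and use $\cos^2\theta_i+\sin^2\theta_i=1$ to eliminate the angular dependence. This is correct and matches the paper's approach.
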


Another simple computation yields.
\begin{align*}
    \displaystyle\langle v_0(\theta,r), \dot{v_0}(\theta,r) \rangle = \left(\frac{1}{f_0(r)^2}\sum_{i=1}^{n-1}r_i \frac{\partial f_0}{\partial r_i}(r)\right)\dot{f_0}(\theta,r) + \sum_{i=1}^{n-1}\left(\frac{\partial f_0}{\partial r_i}(r)-\frac{r_i}{f_0(r)}\right)\frac{\partial \dot{f_0}}{\partial r_i}(\theta,r).
\end{align*}
Together with Lemma \ref{2a}, this implies
\begin{lemma}\label{vdotv} The function 
    $$u \mapsto \displaystyle\langle v_0(w_K^{-1}(u)), \dot{v_0}(w_K^{-1}(u)) \rangle$$ is $\theta$-simple for any smooth $H \colon \RR^{2n}\to \RR$.
\end{lemma}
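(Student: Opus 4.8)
The plan is to reduce the lemma to two elementary closure properties of $\theta$-simplicity together with one observation about precomposition with $w_K^{-1}$. The key elementary fact is the following: since $K$ is toric, $f_0=f_0(r)$ does not depend on $\theta$, so for any smooth $G\colon\RR^{2n}\to\RR$ and any index $j$ the function
\[
(\theta,r)\longmapsto \frac{\partial G}{\partial\theta_j}(\theta,r,f_0(r))
\]
equals $\frac{\partial}{\partial\theta_j}\big[G(\theta,r,f_0(r))\big]$, with $(\theta,r)\mapsto G(\theta,r,f_0(r))$ smooth; hence it is $\theta$-simple, and by Remark \ref{simple} any finite combination of such functions with coefficients depending only on $r$ is again $\theta$-simple.

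First I would apply this to $\dot f_0$: from the simplified formula for $\dot f_0$ obtained above, $\dot f_0(\theta,r)$ is a combination, with coefficients depending only on $r$, of the functions $\frac{\partial H}{\partial\theta_j}(\theta,r,f_0(r))$ for $j=1,\dots,n$, and is therefore $\theta$-simple. Next I would differentiate each summand $A(r)\,\frac{\partial H}{\partial\theta_j}(\theta,r,f_0(r))$ of $\dot f_0$ in $r_i$; the chain rule gives
\[
\frac{\partial A}{\partial r_i}(r)\,\frac{\partial H}{\partial\theta_j}(\theta,r,f_0(r)) \;+\; A(r)\Big(\frac{\partial^2 H}{\partial r_i\,\partial\theta_j} + \frac{\partial f_0}{\partial r_i}(r)\,\frac{\partial^2 H}{\partial\rho\,\partial\theta_j}\Big)(\theta,r,f_0(r)),
\]
and by equality of mixed partials the two second-order $H$-terms are the $\theta_j$-derivatives of $\frac{\partial H}{\partial r_i}$ and of $\frac{\partial H}{\partial\rho}$; applying the key fact to $G=\frac{\partial H}{\partial r_i}$ and to $G=\frac{\partial H}{\partial\rho}$ then shows that $\frac{\partial\dot f_0}{\partial r_i}(\theta,r)$ is $\theta$-simple as well. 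Substituting these two facts into the formula for $\langle v_0(\theta,r),\dot v_0(\theta,r)\rangle$ established above — whose coefficients depend only on $r$ — and invoking Remark \ref{simple} once more, I would conclude that $(\theta,r)\mapsto\langle v_0(\theta,r),\dot v_0(\theta,r)\rangle$ is $\theta$-simple. Finally, since $w_K^{-1}(\theta,r)=(\theta,F(r))$ leaves the $\theta$-coordinates fixed, precomposition with $w_K^{-1}$ sends a $\theta$-simple function $\sum_i A_i(r)\frac{\partial J_i}{\partial\theta_{\zeta(i)}}(\theta,r)$ to $\sum_i (A_i\circ F)(r)\,\frac{\partial}{\partial\theta_{\zeta(i)}}\big[J_i(\theta,F(r))\big]$, which is again $\theta$-simple; applying this to $\langle v_0,\dot v_0\rangle$ gives the lemma.

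I expect the only delicate point to be the second step, and it is precisely there that the toricity hypothesis enters in an essential way. Differentiating $H(\theta,r,f_0(r))$ in $r_i$ pulls down both a factor $\frac{\partial f_0}{\partial r_i}$ and a $\rho$-derivative of $H$, and one must verify that neither spoils $\theta$-simplicity. Neither does, because $f_0$ is $\theta$-independent: $\frac{\partial f_0}{\partial r_i}$ is then an admissible coefficient depending only on $r$, and $\frac{\partial^2 H}{\partial\rho\,\partial\theta_j}(\theta,r,f_0(r))$ is still the $\theta_j$-derivative of a smooth function of $(\theta,r)$. If $f_0$ were allowed to depend on $\theta$, the same chain rule would generate terms involving $\frac{\partial f_0}{\partial\theta_i}$ that need not be $\theta$-simple, and this argument — along with Theorem \ref{variation} itself — would fail.
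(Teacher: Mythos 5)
Your proof is correct and follows essentially the same route as the paper: it rests on the same explicit formula for $\langle v_0(\theta,r),\dot v_0(\theta,r)\rangle$ together with the formula for $\dot f_0$ (the content of Lemma \ref{2a}), using Remark \ref{simple} to assemble $\theta$-simple pieces and the observation that $w_K^{-1}$ fixes the $\theta$-coordinates. You are in fact a bit more careful than the paper, which passes directly from Lemma \ref{2a} to the conclusion without spelling out that the $r_i$-derivatives $\partial\dot f_0/\partial r_i$ appearing in the formula are also $\theta$-simple; your chain-rule-plus-mixed-partials argument, which uses toricity to kill the $\partial f_0/\partial\theta$ terms, is exactly the detail the paper leaves implicit.
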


\begin{lemma}\label{Dv}
    The function $$u=(\theta,r) \mapsto \left\langle u, Dw_K^{-1}(u)\left[  v_0(w_K^{-1}(u))\right] \right\rangle$$ only depends on $r$.
\end{lemma}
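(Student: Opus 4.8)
The plan is to exploit two structural features already visible in the coordinate formulas derived above. First, the explicit expression for $v_0(\theta,r)$ displayed just before Lemma \ref{normv} shows that $v_0$ is \emph{radially aligned}: its $\partial/\partial x_j$ and $\partial/\partial y_j$ components share a common coefficient depending on $r$ only, so that
\begin{align*}
v_0(\theta,r) = \sum_{j=1}^{n} P_j(r)\left(\cos\theta_j\,\frac{\partial}{\partial x_j} + \sin\theta_j\,\frac{\partial}{\partial y_j}\right),
\end{align*}
with $P_j(r) = \frac{r_j}{f_0(r)} - \frac{\partial f_0}{\partial r_j}(r)$ for $j<n$ and $P_n(r) = \frac{r_n(0,r)}{f_0(r)}$. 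Second, $w_K^{-1}$ fixes the angular variables, $w_K^{-1}(\theta,r) = (\theta, F(r))$, so this alignment survives evaluation at $w_K^{-1}(u)$:
\begin{align*}
v_0\big(w_K^{-1}(u)\big) = \sum_{j=1}^{n} Q_j(r)\left(\cos\theta_j\,\frac{\partial}{\partial x_j} + \sin\theta_j\,\frac{\partial}{\partial y_j}\right), \qquad Q_j(r) := P_j(F(r)),
\end{align*}
where each $Q_j$ depends on $r$ alone.

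Next I would pair $u$ with the vectors $Dw_K^{-1}(u)[\partial/\partial x_i]$ and $Dw_K^{-1}(u)[\partial/\partial y_i]$ furnished by Lemma \ref{D}. Using $\langle u, \partial/\partial x_j\rangle = r_j\cos\theta_j$ and $\langle u, \partial/\partial y_j\rangle = r_j\sin\theta_j$, the ``mixing'' sums $\cos\theta_i\sum_j r_j\cos\theta_j\,\frac{\partial a_j}{\partial r_i}\langle u,\partial_{x_j}\rangle + \cos\theta_i\sum_j r_j\sin\theta_j\,\frac{\partial a_j}{\partial r_i}\langle u,\partial_{y_j}\rangle$ collapse to $\cos\theta_i\sum_j r_j^2\,\frac{\partial a_j}{\partial r_i}$ via $\cos^2\theta_j + \sin^2\theta_j = 1$. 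A short computation then gives, for $1\le i\le n$,
\begin{align*}
\left\langle u,\, Dw_K^{-1}(u)\left[\frac{\partial}{\partial x_i}\right]\right\rangle = b_i(r)\cos\theta_i, \qquad \left\langle u,\, Dw_K^{-1}(u)\left[\frac{\partial}{\partial y_i}\right]\right\rangle = b_i(r)\sin\theta_i,
\end{align*}
where $b_i(r) = a_i(r)r_i + \sum_{j=1}^{n} r_j^2\,\frac{\partial a_j}{\partial r_i}(r)$ for $i<n$ and $b_n(r) = a_n(r)\sqrt{1-r_1^2-\cdots-r_{n-1}^2}$; in particular each $b_i$ depends only on $r$.

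Finally, expanding $v_0(w_K^{-1}(u))$ in the frame $\{\partial/\partial x_i,\partial/\partial y_i\}$ as in the first step and using the linearity of $Dw_K^{-1}(u)$ together with the bilinearity of $\langle\,\cdot\,,\,\cdot\,\rangle$,
\begin{align*}
\left\langle u,\, Dw_K^{-1}(u)\big[v_0(w_K^{-1}(u))\big]\right\rangle
&= \sum_{i=1}^{n} Q_i(r)\Big(\cos\theta_i\, b_i(r)\cos\theta_i + \sin\theta_i\, b_i(r)\sin\theta_i\Big)\\
&= \sum_{i=1}^{n} Q_i(r)\,b_i(r),
\end{align*}
which depends only on $r$, as claimed. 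I do not expect a deep obstacle here: the effort is careful bookkeeping between the Cartesian and Hopf descriptions, and the one substantive point --- which is precisely where the toric hypothesis on $K$ enters, since it forces $f_0$, the functions $a_j$, and hence $v_0(w_K^{-1}(\cdot))$ to be $\theta$-free apart from the explicit $\cos\theta_j,\sin\theta_j$ factors --- is that the non-radial ``mixing'' terms produced by $Dw_K^{-1}$ in Lemma \ref{D} cancel identically upon pairing with $u$.
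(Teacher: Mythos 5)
Your proposal is correct and takes essentially the same route as the paper's proof: both expand $v_0(w_K^{-1}(u))$ using the explicit radially-aligned form $\sum_j Q_j(r)(\cos\theta_j\,\partial_{x_j}+\sin\theta_j\,\partial_{y_j})$, push it through $Dw_K^{-1}(u)$ via Lemma \ref{D}, and then pair with $u$, at which point the $\cos^2\theta_j+\sin^2\theta_j=1$ identity removes all angular dependence. The only cosmetic difference is that you first record $\langle u,\,Dw_K^{-1}(u)[\partial_{x_i}]\rangle=b_i(r)\cos\theta_i$ and its $y$-analogue and then sum, whereas the paper substitutes the full expansion into $Dw_K^{-1}(u)$ in one step.
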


\begin{proof}
Set \begin{align*}
C_k(r) =
    \begin{cases}
    \displaystyle-\frac{\partial f_0}{\partial r_i}(F(r)) + \frac{F_i(r)}{f_0(F(r))}\,\,\text{for}\,\,k \neq n\\\\
        \displaystyle \frac{r_n(0,F(r))}{f_0(F(r))} \,\,\text{for}\,\,k=n.
    \end{cases}
\end{align*}
Then $$v_0(w_K^{-1}(u)) =\sum_{k=1}^n C_k(r)\cos\theta_k \frac{\partial}{\partial x_k}+ \sum_{k=1}^n C_k(r)\sin\theta_k \frac{\partial}{\partial y_k}.$$ It follows from Lemma \ref{D} that
\begin{align*}Dw_K^{-1}(u)\left[  v_0(w_K^{-1}(u))\right] =& \sum_{j=1}^n \left( r_j\sum_{k=1}^{n-1}C_k(r)\frac{\partial a_j}{\partial r_k}(r)+ a_j(r)C_j(r) \right)\cos \theta_j \frac{\partial}{\partial x_j}\\ &+ \nonumber \sum_{j=1}^n \left( r_j\sum_{k=1}^{n-1}C_k(r)\frac{\partial a_j}{\partial r_k}(r)+ a_j(r)C_j(r) \right)\sin \theta_j \frac{\partial}{\partial y_j}.\end{align*}
Hence, we have
$$\langle u, Dw_K^{-1}(u)\left[  v_0(w_K^{-1}(u))\right] \rangle = \sum_{j=1}^n \left( r_j^2\sum_{k=1}^{n-1}C_k(r)\frac{\partial a_j}{\partial r_k}(r)+ r_j a_j(r)C_j(r) \right).$$

\end{proof}

\begin{lemma}\label{Dvdot}
    The function $$u=(\theta,r) \mapsto \left\langle u, Dw_K^{-1}(u)\left[  \dot{v}_0(w_K^{-1}(u))\right] \right\rangle$$ is $\theta$-simple for any smooth $H \colon \RR^{2n}\to \RR$.
\end{lemma}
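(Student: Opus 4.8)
The proof combines the explicit formula for $\dot v_0$ displayed above with the computation already carried out in the proof of Lemma \ref{Dv}. The starting point is a general remark: if $w$ is a vector field written in Cartesian form as $w=\sum_{k=1}^{n}\bigl(W_k^x(\theta,r)\,\frac{\partial}{\partial x_k}+W_k^y(\theta,r)\,\frac{\partial}{\partial y_k}\bigr)$, then applying the linear map $Dw_K^{-1}(u)$ term by term via Lemma \ref{D} and pairing with $u=\sum_k\bigl(x_k\frac{\partial}{\partial x_k}+y_k\frac{\partial}{\partial y_k}\bigr)$, every product of two distinct angular factors collapses through $\cos^2\theta_j+\sin^2\theta_j=1$ exactly as in the proof of Lemma \ref{Dv}, leaving
\[
\bigl\langle u,\, Dw_K^{-1}(u)[w]\bigr\rangle=\sum_{k=1}^{n}\Phi_k(r)\,\bigl(W_k^x\cos\theta_k+W_k^y\sin\theta_k\bigr)
\]
for certain smooth functions $\Phi_k$ of $r$ alone, computed just as in that proof.

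I would then apply this with $w=\dot v_0(w_K^{-1}(u))=\dot v_0(\theta,F(r))$, reading off the components $W_k^x,W_k^y$ from the explicit expression for $\dot v_0(\theta,r)$ with $r$ replaced by $F(r)$. The decisive point is that the two terms of $\dot v_0$ carrying $\frac{\partial\dot f_0}{\partial\theta_k}$ enter $W_k^x$ with the factor $\sin\theta_k$ and $W_k^y$ with the factor $-\cos\theta_k$, so that they cancel in $W_k^x\cos\theta_k+W_k^y\sin\theta_k$; the remaining terms combine through $\cos^2\theta_k+\sin^2\theta_k=1$ to give
\[
W_k^x\cos\theta_k+W_k^y\sin\theta_k=-\frac{\partial\dot f_0}{\partial r_k}(\theta,F(r))-\frac{F_k(r)}{f_0(F(r))^2}\,\dot f_0(\theta,F(r))\qquad(k<n),
\]
and, for $k=n$, a multiple of $\dot f_0(\theta,F(r))$ by a smooth radial function. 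Hence $\bigl\langle u,\,Dw_K^{-1}(u)[\dot v_0(w_K^{-1}(u))]\bigr\rangle$ is a combination, with coefficients that are smooth functions of $r$ alone, of the functions $\dot f_0(\theta,F(r))$ and $\frac{\partial\dot f_0}{\partial r_k}(\theta,F(r))$ for $k=1,\dots,n-1$.

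By Remark \ref{simple} it then suffices to show that each of these functions is $\theta$-simple. For $\dot f_0(\theta,F(r))$ this is Lemma \ref{2a}, whose proof rests on the observation that, since $f_0$ does not depend on $\theta$, each $\frac{\partial H}{\partial\theta_j}(\theta,F(r),f_0(F(r)))$ is the $\theta_j$-derivative of the smooth function $(\theta,r)\mapsto H(\theta,F(r),f_0(F(r)))$. For $\frac{\partial\dot f_0}{\partial r_k}(\theta,F(r))$ I would first differentiate the displayed formula for $\dot f_0(\theta,r)$ with respect to $r_k$ and only afterwards substitute $r\mapsto F(r)$: an $r_k$-derivative either lands on a purely radial coefficient, which is harmless, or it produces a mixed second partial $\frac{\partial^2 H}{\partial r_k\partial\theta_j}$ or $\frac{\partial^2 H}{\partial\rho\partial\theta_j}$ evaluated at $(\theta,r,f_0(r))$, and each such term, again because $f_0$ is $\theta$-free, equals the $\theta_j$-derivative of $\frac{\partial H}{\partial r_k}$ (resp. $\frac{\partial H}{\partial\rho}$) evaluated at $(\theta,r,f_0(r))$, hence is of the required form. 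Thus $\frac{\partial\dot f_0}{\partial r_k}(\theta,r)$ is already $\theta$-simple, substitution by $F$ preserves this, and a final application of Remark \ref{simple} completes the proof.

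The main obstacle is the bookkeeping in the middle step — carrying out the $\frac{\partial\dot f_0}{\partial\theta_k}$ cancellation and recording which radial functions multiply the surviving $\theta$-dependent pieces — together with the one genuinely delicate point at the end: confirming that a radial derivative of $\dot f_0$ does not break $\theta$-simplicity. This is exactly where it matters that $f_0$ is independent of $\theta$, since that is what forces every second partial of $H$ produced by the $r_k$-differentiation to still differentiate in some angular variable, so that no bare value of $H$ or of a first partial of $H$ ever appears.
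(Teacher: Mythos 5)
Your proof is correct and follows the same computational scheme as the paper: apply Lemma \ref{D} to evaluate the pairing $\langle u, Dw_K^{-1}(u)[\,\cdot\,]\rangle$, observe that the contributions carrying $\frac{\partial\dot f_0}{\partial\theta_k}$ cancel (via $\sin\theta_k\cos\theta_k-\cos\theta_k\sin\theta_k=0$), and reduce to the $\theta$-simplicity of the surviving terms, which are precisely the paper's $B_j(\theta,r)$. Your general collapse formula $\langle u, Dw_K^{-1}(u)[w]\rangle=\sum_k\Phi_k(r)\bigl(W_k^x\cos\theta_k+W_k^y\sin\theta_k\bigr)$ is an accurate repackaging of the computation the paper carries out term by term. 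You do, however, go further than the paper in one useful respect: the paper's proof closes with the bare assertion that the $B_j(\theta,r)$ are $\theta$-simple, and since $B_k$ (for $k<n$) contains $\frac{\partial\dot f_0}{\partial r_k}(\theta,F(r))$, this is not an immediate consequence of Lemma \ref{2a}. Your last paragraph supplies exactly the missing verification: differentiating the explicit formula for $\dot f_0$ in $r_k$ before substituting $r\mapsto F(r)$ only produces radial coefficients times mixed second partials $\frac{\partial^2 H}{\partial r_k\partial\theta_m}$ and $\frac{\partial^2 H}{\partial\rho\partial\theta_m}$, and because $f_0$ is $\theta$-independent each of these is again a $\theta_m$-derivative of a smooth function of $(\theta,r)$, so $\theta$-simplicity survives the $r_k$-differentiation and the $F$-substitution. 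Your write-up is thus the more complete version of the same argument.
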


\begin{proof}
Set \begin{align*}
A_k(\theta,r) =
    \begin{cases}
    \displaystyle\frac{1}{F_k(r)}\frac{\partial \dot{f}_0}{\partial \theta_k}(\theta,F(r))\,\,\text{for}\,\,k \neq n\\\\
        \displaystyle \frac{1}{r_n(0,F(r))}\frac{\partial \dot{f}_0}{\partial \theta_n}(\theta,F(r)) \,\,\text{for}\,\,k=n,
    \end{cases}
\end{align*}
and 
\begin{align*}
B_k(\theta,r) =
    \begin{cases}
    \displaystyle-\frac{\partial \dot{f}_0}{\partial r_k}(\theta,F(r)) - \frac{F_k(r) \dot{f}_0(\theta,F(r))}{f^2_0(F(r))}\,\,\text{for}\,\,k \neq n\\\\
        \displaystyle \frac{(\sum_{j=1}^{n-1}F_j(r)^2)}{r_n(0,F(r))}\frac{\dot{f}_0(\theta,F(r))}{f^2_0(F(r))} \,\,\text{for}\,\,k=n.
    \end{cases}
\end{align*}
Then 
\begin{align*}
    \dot{v}_0(w_K^{-1}(u)) =& \sum_{k=1}^{n} \left(A_k(\theta,r) \sin \theta_k + B_k(\theta,r)
    \cos \theta_k \right)\frac{\partial}{\partial x_k} \\&+ \sum_{k=1}^{n} \left(-A_k(\theta,r) \cos \theta_k + B_k(\theta,r)
    \sin \theta_k \right)\frac{\partial}{\partial y_k},
\end{align*}
and Lemma \ref{D} implies that $Dw_K^{-1}(u)\left[  \dot{v}_0(w_K^{-1}(u))\right]$ is equal to
\begin{align*}
 &{}\sum_{j=1}^n \left( a_j(r)A_r(\theta,r)\sin \theta_j  + \left(r_j\sum_{k=1}^{n-1}B_k(\theta,r) \frac{\partial a_j}{\partial r_k}(r) +a_j(r)B_j(\theta,r)\right)\cos \theta_j  \right)\frac{\partial}{\partial x_j}\\+&\sum_{j=1}^n \left( -a_j(r)A_r(\theta,r)\cos \theta_j  + \left(r_j\sum_{k=1}^{n-1}B_k(\theta,r) \frac{\partial a_j}{\partial r_k}(r) +a_j(r)B_j(\theta,r)\right)\sin \theta_j\right)\frac{\partial}{\partial y_j}.
\end{align*}
Hence,
$$
\left\langle u, Dw_K^{-1}(u)\left[  \dot{v}_0(w_K^{-1}(u))\right] \right\rangle = \left(\sum_{k=1}^{n-1}B_k(\theta,r) \frac{\partial a_j}{\partial r_k}(r)\right)r^2_j +r_ja_j(r)B_j(\theta,r)
$$
which is $\theta$-simple because the $B_j(\theta,r)$ are.

\end{proof}

We are now in the position complete the proof of Proposition \ref{form} by verifying that the three terms on the right side of equation \eqref{master} are all $\theta$-simple. 

\medskip

\noindent{\bf First term.} By equations \eqref{dinverse} and \eqref{w2v}, we have
\begin{align*}
 &f_0(w_K^{-1}(u))\left\langle u, \left.\frac{d}{dt}\right|_{t=0} w^{-1}_{K_t}(u)\right\rangle  
\\ =& -f_0(w_K^{-1}(u))\left\langle u,  Dw_K^{-1}(u)\left[  \dot{w}_K(w_K^{-1}(u))\right]\right\rangle \\ =&-\frac{f_0(w_K^{-1}(u))}{\|v_0(w_K^{-1}(u))\|}\left\langle u,  Dw_K^{-1}(u)\left[  \dot{v}_0(w_K^{-1}(u))\right]\right\rangle \\ &-\frac{f_0(w_K^{-1}(u))\langle\dot{v}_0(w_K^{-1}(u)),v_0(w_K^{-1}(u)) \rangle}{\|v_0(w_K^{-1}(u))\|^3}\left\langle u,  Dw_K^{-1}(u)\left[ v_0(w_K^{-1}(u))\right]\right\rangle.
\end{align*}
Lemmas \ref{f0}, \ref{normv} and Lemma \ref{Dvdot} imply that the first summand is $\theta$-simple, and Lemmas \ref{f0}, \ref{normv}, \ref{vdotv} and  \ref{Dv} imply that the second one is.

\medskip

\noindent{\bf Second term.}
By Lemma \ref{2a}, $\dot{f}_0(w_{K}^{-1}(u))$ is $\theta$-simple. Lemma \ref{2b} implies that $\langle u, w_K^{-1}(u)\rangle$ is a function of $r$. Hence their product, the second term in \eqref{master}, is $\theta$-simple.

\medskip

\noindent{\bf Third term.} By Lemma \ref{2b}, it suffices to show that the function
\begin{align*}
 df_0 (w_{K}^{-1}(u)) \left[  \left.\frac{d}{dt}\right|_{t=0} w^{-1}_{K_t}(u)\right]
\end{align*}
is $\theta$-simple. Invoking \eqref{w2v} and arguing as for the first term, it is enough to show the following
\begin{lemma}\label{nail}
The function 
\begin{align*}
     df_0 (w_{K}^{-1}(u)) \left[Dw_K^{-1}(u)\left[ v_0(w^{-1}_{K}(u))\right]\right]
\end{align*}
only depends on $r$, and the function
\begin{align*}
     df_0 (w_{K}^{-1}(u)) \left[Dw_K^{-1}(u)\left[ \dot{v}_0(w^{-1}_{K}(u))\right]\right]
\end{align*}
is $\theta$-simple.
\end{lemma}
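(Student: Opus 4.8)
\textbf{Proof proposal for Lemma \ref{nail}.}
The plan is to reduce both assertions to the coordinate formulas already assembled, the only missing ingredient being a description of the covector $df_0$. Since $K$ is toric, $f_0=f_0(r)$ with $r=(r_1,\dots,r_{n-1})$; viewing $f_0$ as a function on $\RR^{2n}$ that depends on $(x_i,y_i)$ only through $r_i=\sqrt{x_i^2+y_i^2}$ for $i\le n-1$, we have $df_0=\sum_{i=1}^{n-1}\frac{\partial f_0}{\partial r_i}\,dr_i$ with $dr_i=\cos\theta_i\,dx_i+\sin\theta_i\,dy_i$. Evaluating at $w_K^{-1}(u)=(\theta,F(r))$ then gives, for a vector $W=\sum_{j=1}^{n}\big(W_j^x\,\partial_{x_j}+W_j^y\,\partial_{y_j}\big)$,
$$
df_0\big(w_K^{-1}(u)\big)[W]=\sum_{i=1}^{n-1}\frac{\partial f_0}{\partial r_i}(F(r))\,\big(\cos\theta_i\,W_i^x+\sin\theta_i\,W_i^y\big).
$$
Thus $df_0$ extracts only the \emph{radial} component of $W$ in each of the first $n-1$ factors of $\CC^n$, with a weight depending on $r$ alone. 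I would record this as a one-line preliminary lemma before the two computations.

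For the first assertion, recall from the proof of Lemma \ref{Dv} that $Dw_K^{-1}(u)\big[v_0(w_K^{-1}(u))\big]=\sum_{j=1}^n\Phi_j(r)\big(\cos\theta_j\,\partial_{x_j}+\sin\theta_j\,\partial_{y_j}\big)$, where $\Phi_j(r)=r_j\sum_{k=1}^{n-1}C_k(r)\frac{\partial a_j}{\partial r_k}(r)+a_j(r)C_j(r)$; this vector is purely radial in each factor. Contracting with the displayed $df_0$, the angular weights collapse via $\cos^2\theta_i+\sin^2\theta_i=1$, and what remains is $\sum_{i=1}^{n-1}\frac{\partial f_0}{\partial r_i}(F(r))\,\Phi_i(r)$, a function of $r$ only, as claimed.

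For the second assertion, the computation in the proof of Lemma \ref{Dvdot} expresses the $(x_j,y_j)$-components of $Dw_K^{-1}(u)\big[\dot v_0(w_K^{-1}(u))\big]$ as $\big(a_j(r)A_j(\theta,r)\sin\theta_j+\widetilde B_j(\theta,r)\cos\theta_j,\ -a_j(r)A_j(\theta,r)\cos\theta_j+\widetilde B_j(\theta,r)\sin\theta_j\big)$, with $\widetilde B_j(\theta,r)=r_j\sum_{k=1}^{n-1}B_k(\theta,r)\frac{\partial a_j}{\partial r_k}(r)+a_j(r)B_j(\theta,r)$ — a radial part governed by $\widetilde B_j$ plus a tangential part governed by $a_jA_j$. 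Plugging this into the $df_0$ formula, the tangential contributions cancel termwise ($\cos\theta_i\sin\theta_i-\sin\theta_i\cos\theta_i=0$) and only the radial ones survive, leaving $\sum_{i=1}^{n-1}\frac{\partial f_0}{\partial r_i}(F(r))\,\widetilde B_i(\theta,r)$. It remains to observe that each $B_k(\theta,r)$ is $\theta$-simple: by the expression for $\dot f_0$ preceding Lemma \ref{2a}, $\dot f_0$ has the form $\sum_j c_j(r)\,\frac{\partial}{\partial\theta_j}\big[H(\theta,r,f_0(r))\big]$, and differentiating this in $r_k$ and commuting the $r_k$-derivative past the $\theta_j$-derivative preserves that shape, so $\partial\dot f_0/\partial r_k$ is $\theta$-simple; hence so are the functions defining the $B_k$, so $\widetilde B_i$ is $\theta$-simple, and so finally is $\sum_i\frac{\partial f_0}{\partial r_i}(F(r))\,\widetilde B_i$, by Remark \ref{simple}.

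All the computations here are of the same elementary type as those preceding the lemma, so I expect no genuine difficulty; the main nuisance will be purely organizational — keeping the nested substitutions $r\mapsto F(r)$ consistent and keeping careful track of which summands are radial (surviving the contraction with $dr_i$ as $\theta$-independent factors) and which are tangential (either cancelling or producing the $\theta$-simple remainder). Once the $df_0$ formula above is isolated, the rest is bookkeeping that parallels the proofs of Lemmas \ref{Dv} and \ref{Dvdot}.
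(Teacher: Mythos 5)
Your proof is correct and follows essentially the same route as the paper's: derive the coordinate formula $df_0(w_K^{-1}(u))=\sum_{i=1}^{n-1}\frac{\partial f_0}{\partial r_i}(F(r))(\cos\theta_i\,dx_i+\sin\theta_i\,dy_i)$ from toric invariance, then contract it against the explicit vector fields already computed in Lemmas \ref{Dv} and \ref{Dvdot}, observing that in the first case the angular dependence collapses and in the second only the $\theta$-simple $B_k$'s survive. Two small points worth noting: you correctly give the outer sum range as $1,\dots,n-1$ (the paper's displayed formula writes $\sum_{j=1}^n$, a typo, since $df_0$ has no $dr_n$ component), and you explicitly justify why $\partial\dot f_0/\partial r_k$ — and hence each $B_k$ — is $\theta$-simple by commuting the $r_k$-derivative past $\partial_{\theta_j}$, a step the paper leaves implicit when it asserts ``since the $B_j(\theta,r)$ are $\theta$-simple.'' Those are improvements, not deviations.
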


\begin{proof}
    In the notation of the proof of Lemma \ref{Dv}, we have
    \begin{align*}Dw_K^{-1}(u)\left[  v_0(w_K^{-1}(u))\right] =& \sum_{j=1}^n \left( r_j\sum_{k=1}^{n-1}C_k(r)\frac{\partial a_j}{\partial r_k}(r)+ a_j(r)C_j(r) \right)\cos \theta_j \frac{\partial}{\partial x_j}\\ &+\nonumber \sum_{j=1}^n \left( r_j\sum_{k=1}^{n-1}C_k(r)\frac{\partial a_j}{\partial r_k}(r)+ a_j(r)C_j(r) \right)\sin \theta_j \frac{\partial}{\partial y_j}.\end{align*}
Since
\begin{align*}
    df_0(u) = \sum_{j=1}^{n-1} \frac{\partial f_0}{\partial r_j}(r) dr_j =\sum_{j=1}^{n-1} \frac{\partial f_0}{\partial r_j}(r) (\cos \theta_j\, dx_j + \sin \theta_j \,dy_j),
\end{align*}
we have
\begin{align*}
    df_0(w_K^{-1}(u)) =\sum_{j=1}^{n-1} \frac{\partial f_0}{\partial r_j}(F(r)) (\cos \theta_j\, dx_j + \sin \theta_j \,dy_j).
\end{align*}
Hence,  
\begin{align*}
     df_0 (w_{K}^{-1}(u)) \left[Dw_K^{-1}(u)\left[ v_0(w^{-1}_{K_t}(u))\right]\right] = \sum_{j=1}^n \frac{\partial f_0}{\partial r_j}(F(r))\left( r_j\sum_{k=1}^{n-1}C_k(r)\frac{\partial a_j}{\partial r_k}(r)+ a_j(r)C_j(r) \right)
\end{align*}
and the first assertion of Lemma has been verified. To verify the second, we use the notation of the proof of Lemma \ref{Dvdot} to write
\begin{align*}
 &Dw_K^{-1}(u)\left[  \dot{v}_0(w_K^{-1}(u))\right] \\=& \sum_{j=1}^n \left( a_j(r)A_r(\theta,r)\sin \theta_j  + \left(r_j\sum_{k=1}^{n-1}B_k(\theta,r) \frac{\partial a_j}{\partial r_k}(r) +a_j(r)B_j(\theta,r)\right)\cos \theta_j  \right)\frac{\partial}{\partial x_j}\\&+ \sum_{j=1}^n \left( -a_j(r)A_r(\theta,r)\cos \theta_j  + \left(r_j\sum_{k=1}^{n-1}B_k(\theta,r) \frac{\partial a_j}{\partial r_k}(r) +a_j(r)B_j(\theta,r)\right)\sin \theta_j\right)\frac{\partial}{\partial y_j}.
\end{align*}
It follows that 
\begin{align*}
     df_0 (w_{K}^{-1}(u)) \left[Dw_K^{-1}(u)\left[ \dot{v}_0(w^{-1}_{K_t}(u))\right]\right] = \sum_{j=1}^n  \frac{\partial f_0}{\partial r_j}(F(r))\left(r_j\sum_{k=1}^{n-1}B_k(\theta,r) \frac{\partial a_j}{\partial r_k}(r) +a_j(r)B_j(\theta,r)\right).
\end{align*}
Since the $B_j(\theta,r)$ are $\theta$-simple, the proof of Proposition \ref{form} is complete.
\end{proof}

Theorem \ref{variation} has now been established for a toric convex body which is strictly convex and has smooth boundary. An approximation argument will now allow us to relax these extra assumptions. 

Let $K$ be a toric convex body and consider a Hamiltonian flow, $\phi^t_H$. We need to prove that for the function $F(t) = \mw(\phi^t_H(K))$ we have 
$F'(0)=0$. The work of Weil in \cite{weil}, implies that there is a sequence of convex bodies $K_j$ converging to $K$ in the Hausdorff topology, such that the boundary of each $K_j$ is smooth, the principal curvatures of each $K_j$ are positive, and the principal curvatures of the $K_j$ converge to those of $K$ almost everywhere. Consider the functions $$F_j(t) = \mw(\phi^t_H (K_j))$$ for $t$ in some fixed interval $[-\epsilon, \epsilon]$. By the proof above, we have $F_j'(0)=0$ for all $j \in \NN$. Since the sequence $K_j$ converges to $K$ in the Hausdorff topology, the sequence $F_j$ converges to $F$ pointwise. The assertion about the principal  curvatures, together with the fact that $H$ has compact support, implies that $|F_j''(t)|$ is uniformly bounded for all $t\in [-\epsilon, \epsilon]$ and for all $j \in \NN$. Hence, the derivatives $F'_j$ converge uniformly. From this, it follows that the sequence $F_j$ converges uniformly to $F$ and that $$F'(0) =  \lim_{j\to \infty} F_j'(0) =0,$$as desired.

\section{Proof of Theorem \ref{linear}}\label{sec:linear}

\subsection{Reducing to the case of a convex body $K$ with smooth and strictly convex boundary} Arguing by approximation, as in the proof of Theorem \ref{variation} above, one can show that it suffices to prove that the identity matrix  $\II \in \mathrm{Sp}(2n)$ is an isolated local minimum of the function $\mathrm{Sp}(2n) \to \RR$ defined by 
\begin{align*}
 P \mapsto \mw(PK)
\end{align*}
when $K$ is a toric convex body which is strictly convex and has smooth boundary. We may also assume, in what follows, that the center of mass of $K$ is the origin.

\subsection{On the support function of $K$} The support function of $K$ is given by the formula $h_K(u) = \langle u,\nu^{-1}_{K}(u)\rangle$.  This function is smooth, as is its homogeneous extension $H_K \colon \RR^{2n} \setminus \{0\} \to \RR$ defined by 
\begin{align*}
    H_K(z)  &=\sup_{k \in K} \langle k,z\rangle \\
    {\,} &=\|z\| \, h_K \left( z/ \|z\|\right)\\
    {\,}     &=\left\langle z,\nu^{-1}_{K}\left(  z / \|z\|\right)\right\rangle.
\end{align*}
Denoting the components of $z \in \RR^{2n}$ by $$z=(z_1, \dots, z_{2n}) = (x_1, \dots, x_n, y_1, \dots, y_n),$$
we observe that the assumption that the center of mass of $K$ is the origin implies that
\begin{align}\label{eq:possumm}
    z_i (\nu^{-1}_{K}\left(  z / \|z\|\right))_i \geq 0, \text{ for all $i =1, \dots , 2n$},
\end{align}
where $(\nu^{-1}_{K}\left(  z / \|z\|\right))_i$ is the $i$th component of $\nu^{-1}_{K}\left(  z / \|z\|\right) \in \RR^{2n}$.


Since $H_K$ is a homogeneous function of degree 1, we have  $H_K(x) = \langle z,\nabla H_K(z)\rangle$. Hence, $$\nu^{-1}_{K}\left(  z / \|z\|\right) = \nabla H_K(z) $$
and \eqref{eq:possumm} implies that  
\begin{align}\label{eq:gradprts}
    x_i\frac{\partial H_K}{\partial x_i}(z)  \geq 0 \,\,\text{  and   }\,\,  y_i\frac{\partial H_K}{\partial y_i}(z)>0 \text{ for all $i =1, \dots , n$}.
\end{align}

Note also that for any $P \in \mathrm{Sp}(2n)$, the set $PK$ is convex and 
\begin{align}\label{eq;trans}
    h_{PK}(u) = H_K(P^T u).
\end{align}

\subsection{Restricting to symmetric positive definite symplectic matrices.} Every symplectic matrix $P \in \mathrm{Sp}(2n)$ has a unique polar decomposition, $P = QS$, 
where $Q$ belongs to $\mathrm{U}(n)$ and $S$ belongs to $Sym^+(\mathrm{Sp}(2n))$, the submanifold of symmetric positive definite symplectic matrices. It then follows from Remark \ref{toric} that in order to prove Theorem \ref{linear} it suffices to show that the identity matrix  $\II $  is an isolated  local minimum of the function $Sym^+(\mathrm{Sp}(2n)) \to \RR$ defined by 
\begin{align*}
S \mapsto \mw(SK).
\end{align*}
In particular, by formula \eqref{eq;trans}, it suffices to prove the following.

\begin{proposition}\label{prop:lin}
    For any smooth path $A(s)$ in $Sym^+(\mathrm{Sp}(2n))$ with $A(0)=\II$ and $A'(0) \neq 0$, the function
    \begin{align}\label{eq:calc}
        f(s) = \int_{S^{2n-1}} H_K(A(s)u) \, d\sigma
    \end{align}
    satisfies $f'(0) =0$ and $f''(0) >0.$
\end{proposition}

\subsection{Proof of Proposition \ref{prop:lin}}

The fact that $f'(0) =0$  for any choice of the path $A(s)$ is a direct consequence of Theorem \ref{variation}. In particular, we have
\begin{align}\label{eq:zero}
    \int_{S^{2n-1}} \langle \nabla H_K(u), Y u \rangle \, d\sigma =0
\end{align}
for any matrix $Y$ of the form  
\begin{align*}
    Y = \begin{pmatrix}
    C & D\\
    D & -C
    \end{pmatrix}, 
\end{align*}
where the submatrices $C$ and $D$ are both symmetric.

To prove that $f''(0)>0$ we first note that  
\begin{align}\label{eq:f''}
    f''(s) = \int_{S^{2n-1}} \text{Hess}(H_K)(A(s)u)(A'(s)u, A'(s)u) \, d\sigma + \int_{S^{2n-1}} \langle \nabla H_K(A(s)u), A''(s)u \rangle \,d\sigma. 
\end{align}
Here, $\text{Hess}(H_K)$ denotes the Hessian of $H_K$ which is well-defined on $\RR^{2n} \setminus\{0\}$. Since $K$ is assumed to be strictly convex, the Hessian is positive definite. Thus,  since $A'(0) \neq 0$, the first summand on the right of \eqref{eq:f''} is positive when $s=0$, and it  suffices to prove that 
\begin{align*}
\int_{S^{2n-1}} \langle \nabla H_K(u), A''(0)u \rangle \,d\sigma
\end{align*}
is nonnegative.

We can write $A'(s) = A(s)X(s)$ for a family of symmetric matrices $X(s) \in \mathfrak{sp}(2n)$ of the form 
\begin{align*}
    X(s) = \begin{pmatrix}
    C(s) & D(s)\\
    D(s) & -C(s)
    \end{pmatrix},
\end{align*}
where the submatrices $C(s)$ and $D(s)$ are all symmetric. We then have $A''(s) = A(s) (X(s)^2 + X'(s))$ and $A''(0) = X(0)^2 + X'(0)$. Applying $\eqref{eq:zero}$ to $Y=X'(0)$ we have
\begin{align*}
    \int_{S^{2n-1}} \langle \nabla H_K(u), X'(0)u \rangle \, d\sigma = 0.
\end{align*}
It remains to show that 
\begin{align*}
    \int_{S^{2n-1}} \langle \nabla H_K(u), X(0)^2u \rangle d\sigma \geq 0.
\end{align*}
The expression above implies that 
\begin{align*}
    X(0)^2 =&\begin{pmatrix}
    C(0)^2+D(0)^2 & C(0)D(0) - D(0)C(0)\\
    D(0)C(0) - C(0)D(0) & C(0)^2 +D(0)^2
    \end{pmatrix}=:    \begin{pmatrix}
    L & M\\
    -M & L
    \end{pmatrix},
\end{align*}
and it is straight forward to check that the submatrix $L =(\ell_{ij})$ is symmetric, the submatrix $M=(m_{ij})$ is skew-symmetric and the diagonal entries of $L$ are all nonnegative. Setting $$\nabla H_K(u) = (a(u), b(u)) \in \RR^n \times \RR^n,$$ we then have
\begin{align*}
     \int_{S^{2n-1}} \langle \nabla H_K(u), X(0)^2u \rangle \, d\sigma =& \sum_i  \int_{S^{2n-1}} \ell_{ii}(a_i(u)x_i + b_i(u)y_i) \, d\sigma\\&+ \sum_{i<j}  \int_{S^{2n-1}} \ell_{ij} (a_i(u)x_j +b_i(u)y_j +a_j(u)x_i +b_j(u)y_i) \, d\sigma\\& + \sum_{i<j}  \int_{S^{2n-1}} m_{ij}(a_i(u)y_j-b_i(u)x_j - a_j(u)y_i + b_j(u)x_i) \, d\sigma
\end{align*}
Since the $\ell_{ii}$ are nonnegative, it follows from \eqref{eq:gradprts} that the first term above is nonnegative. The fact that $K$ is toric imples that each of the other terms vanish and hence we are done. To see this consider, for example, the terms $\int_{S^{2n-1}}a_i(u)x_j\, d\sigma$ with $i<j.$ Since $i \neq j$ and $K$ is toric, the function $a_i(u) = \frac{\partial H_K}{\partial x_i}(u)$ is invariant under the change of variables $x_j \mapsto -x_j$. It follows that $$\int_{S^{2n-1}}a_i(u)x_j\, d\sigma =-\int_{S^{2n-1}}a_i(u)x_j\, d\sigma =0.$$
This completes the proof.

\section{Proof of Proposition \ref{ellipsoid}}\label{sec:ellipsoid}

Given Theorem \ref{linear}, it suffices to prove that if $\mathbf{a} \neq \mathbf{b}$, then there exist a smooth path $S(s)$ in $Sym^+(\mathrm{Sp}(2n))$ with $S(0) = \II$ such that $$\left.\frac{d}{ds}\right|_{s=0} M(S(s) E(\mathbf{a},\mathbf{b})) \neq 0.$$

We begin by analyzing the formula for $M(S(s) E(\mathbf{a},\mathbf{b}))$.
For any matrix $T \in \mathrm{GL}(d),$ the support function of the ellipsoid $TB^d$ is given by $$h_{TB^d}(u) = \|T^t u\|.$$ Since $E(\mathbf{a},\mathbf{b}) = \Delta(\mathbf{a}, \mathbf{b})B^{2n}$, for $\Delta(\mathbf{a}, \mathbf{b}) = \mathrm{diag}(a_1, \dots, a_n, b_1, \dots, b_n),$ the formula for $M(S(s) E(\mathbf{a},\mathbf{b}))$ can be simplified to 
\begin{align*}
   M(S(s) E(\mathbf{a},\mathbf{b}))  = 2\int_{S^{2n-1}} \|\Delta(\mathbf{a}, \mathbf{b}) S(s) u\| d\sigma.
\end{align*}

 Relabelling if necessary, we may assume that $a_1 < b_1$. Set $$S(s) = \II + (e^s-1)\II_{1,1}+(e^{-s}-1)\II_{n+1,n+1}$$ where $\II_{i,j}$ is the matrix whose $(i,j)-$th entry is one and whose other entries are zero. We then have 
 \begin{align*}
    \left.\frac{d}{ds}\right|_{s=0} M(S(s) E(\mathbf{a},\mathbf{b})) &= 2\int_{S^{2n-1}} \frac{\langle \Delta(\mathbf{a}, \mathbf{b})u, \Delta(\mathbf{a}, \mathbf{b}) S'(0) u \rangle }{\|\Delta(\mathbf{a}, \mathbf{b})u\|}\,d\sigma \\&= 2\int_{S^{2n-1}} \frac{a_1^2x_1^2-b_1^2y_1
^2 }{\|\Delta(\mathbf{a}, \mathbf{b})u\|}\,d\sigma,
\end{align*}
which we claim is negative. In Hopf coordinates,
$$\frac{a_1^2x_1^2-b_1^2y_1
^2 }{\|\Delta(\mathbf{a}, \mathbf{b})u\|} = \frac{r_1^2(a_1^2\cos^2\theta_1- b_1^2\sin^2\theta_1)}{\left(r_1^2(a_1^2\cos^2\theta_1+ b_1^2\sin^2\theta_1)+\sum_{j = 2}^n r_j^2(a_j^2\cos^2\theta_j+ b_j^2\sin^2\theta_j) \right)^{1/2}}.$$
Define $c>0$ by $(1+c)a_1^2 = b_1^2$, and set $A_1= r_1^2a_1^2$ and $B_1 = \sum_{j =2}^n r_j^2(a_j^2\cos^2\theta_j+ b_j^2\sin^2\theta_j)$. Considering the integral over  $\theta_1$ first, it suffices to show that the function
\begin{align*}
    I(c) 
    &= \bigintssss_0^{2 \pi} \frac{A_1(\cos^2\theta_1- (1+c)\sin^2\theta_1)}{\left(A_1(\cos^2\theta_1+ (1+c)\sin^2\theta_1)+B_1 \right)^{1/2}} \,\, d \theta_1
\end{align*}
 is negative for all $c>0$. Clearly, $I(0)=0$ and a simple computation yields  
\begin{align*}
    I'(c) &= -\bigintssss_0^{2 \pi} \frac{A_1 \sin^2\theta_1\left[\frac{3A_1}{2}\cos^2\theta_1+ (1+c)\frac{A_1}{2}\sin^2\theta_1+B_1\right]}{\left(A_1(\cos^2\theta_1+ (1+c)\sin^2\theta_1)+B_1 \right)^{3/2}} \,\, d \theta_1.
\end{align*}
This is clearly negative for all $c>0$, and so the proof is complete.

\section{Proofs of Proposition \ref{lag} and Proposition \ref{vr}}\label{sec:lag}

Here we prove the two results concerning the Lagrangian bidisk $$\mathbf{P}_L = \left\{ (x_1, x_{2}, y_1, y_{2}) \in \mathbb{R}^{4} \bigmid  x_1^2 +x_2^2 \leq 1,\, y_1^2 +y_2^2 \leq 1\right\}.$$

\subsection{The proof of Proposition \ref{lag}} To prove the first assertion it suffices to show that $\II$ is an isolated local minimum of the function  $\mathcal{M}_{\mathbf{P}_L} \colon Sym^+(\mathrm{Sp}(4)) \to \mathbb{R}$ defined by
\begin{align*}
    \mathcal{M}_{\mathbf{P}_L}(S) = \int_{S^{3}} h_{\mathbf{P}_L}(S u) d\sigma.
\end{align*}

For  $u \in S^{3} \subset \RR^{4}$ let $u_x$ be the projection to the $x_1x_2$-plane and $u_y$ be the projection to the $y_1y_2$-plane. We then have $h_{\mathbf{P}_L}(u) =  \|u_x\| + \|u_y \|.$ For any smooth curve $S(s)$ in $Sym^+(\mathrm{Sp}(4))$ with $S(0) = \II$  we have 
\begin{equation}\label{critpl}
    \left.\frac{d}{ds}\right|_{s=0} \mathcal{M}_{\mathbf{P}_L} (S(s)) = \int_{S^{3}} \left\langle \frac{u_x}{\|u_x\|}+\frac{u_y}{\|u_y\|}, S'(0)u \right\rangle d\sigma,
\end{equation}
since  $\nabla h_{\mathbf{P}_L}(u)=\frac{u_x}{\|u_x\|}+\frac{u_y}{\|u_y\|}$ holds almost everywhere.
Setting $S'(0) = \begin{pmatrix} C & D\\ D & -C \end{pmatrix}$ we have
\begin{align*}
    \langle u_x, S'(0)u \rangle &= c_{11}x_1^2 +2c_{12}x_1x_2 + c_{22}x_2^2 +d_{11}x_1y_1+d_{12}(x_1y_2+ x_2y_1) + d_{22}x_2y_2
\end{align*}
and
\begin{align*}
    \langle u_y, S'(0)u \rangle &= d_{11}x_1y_1+d_{12}(x_1y_2+ x_2y_1) + d_{22}x_2y_2 - c_{11}y_1^2 -2c_{12}y_1y_2 - c_{22}y_2^2.
\end{align*}
We now use Lagrangian Hopf coordinates on $\RR^{4}$, $(r, \theta) = (r_1, r_{2}, \theta_1, \theta_{2}),$ defined by $$(x_1,x_2) = (r_1 \cos{\theta_1}, r_1 \sin{\theta_1}), \text{ and  }(y_{1},y_{2}) = (r_{2} \cos{\theta_{2}}, r_{2} \sin{\theta_{2}}).$$ In these coordinates it is easy to show, as before, that the terms 
in \eqref{critpl} with integrands corresponding to cross-terms all vanish.  We are then left with
\begin{align*}
    \left.\frac{d}{ds}\right|_{s=0} \mathcal{M}_{\mathbf{P}_L} (S(s)) = \int_{S^{3}} \left( \sum_i^{2}\frac{ c_{ii}x_i^2}{\|u_x\|}-\sum_i^{2}\frac{ c_{ii}y_i^2}{\|u_y\|}\right) d\sigma
\end{align*}
which vanishes, this time by symmetry. Hence, the identity matrix is a critical point.

Next we verify that the second variation $\left.\frac{d^2}{ds^2}\right|_{s=0} \mathcal{M}_{\mathbf{P}_L} (S(s)),$ is positive.
Here we have  $S'(s)=S(s)X(s)$ for a family of symmetric matrices $X(s) \in \mathfrak{sp}(4)$ of the form 
\begin{align*}
    X(s) = \begin{pmatrix}
    C(s) & D(s)\\
    D(s) & -C(s)
    \end{pmatrix}
\end{align*}
where $C(s)$ and $D(s)$ are symmetric and $X(0) \neq 0$. A straightforward computation, together with the Cauchy-Schwarz inequality, then yields 
\begin{align}\label{eq:upper1}
  \left.\frac{d^2}{ds^2}\right|_{s=0} \mathcal{M}_{\mathbf{P}_L} (S(s))  \geq \int_{S^{3}}
\left\langle \frac{u_x}{\|u_x\|}+\frac{u_y}{\|u_y\|}, X(0)^2u \right\rangle d\sigma.
\end{align}
As before, 
\begin{align*}
    X(0)^2 &=  \begin{pmatrix}
    L & M\\
    -M & L
    \end{pmatrix},
    \end{align*}
where $L = C(0)^2 + D(0)^2$ is symmetric and has positive diagonal entries and $M=C(0)D(0)-D(0)C(0)$ is skew-symmetric. With this, the right-hand side of \eqref{eq:upper1} simplifies to
\begin{align*}
\int_{S^{3}}
\left\langle \frac{u_x}{\|u_x\|}+\frac{u_y}{\|u_y\|}, X(0)^2u \right\rangle d\sigma  = \int_{S^{3}} \left( \sum_i^{2}\frac{ l_{ii}x_i^2}{\|u_x\|}+\sum_i^{2n}\frac{ l_{ii}y_i^2}{\|u_y\|}\right) d\sigma,
\end{align*}
which is clearly positive, as desired.

To complete the proof of Theorem \ref{lag} it remains to verify that $\mathbf{P}_L$ is not equal to  $QX$ for any toric subset $X$ and any $Q \in \mathrm{U}(2)$.
For the orthogonal matrix\begin{equation*}
O = 
\begin{pmatrix}
1 & 0 & 0 &0 \\
0 & 0 & 1 &0 \\
0 & 1 & 0 &0 \\
0 & 0 & 0 &1 
\end{pmatrix}
\end{equation*} 
the set
$$O\mathbf{P}_L = \left\{ (x_1, x_{2}, y_1, y_{2}) \in \mathbb{R}^{4} \bigmid  x_1^2 + y_1^2 \leq 1,\, x_2^2 + y_2^2  \leq 1\right\}$$ is toric.
If we assume $\mathbf{P}_L=QX$ for some toric subset $X$ and some unitary matrix $Q \in \mathrm{U}(2)$, then the matrix $OQ$ would map the toric domain $X$ to another toric domain. This would imply that $OQ$ lies in $\mathrm{GL}(2,\CC) \cap \mathrm{O}(4) = \mathrm{U}(2)$ which would contradict the fact that $\det OQ =\det O =-1$.

\subsection{The proof of Proposition \ref{vr}}

We first recall the theorem, of Ramos, from \cite{vr} that underlies the proof. Let $\Omega_0 \subset \RR^2_{\geq 0}$ be the domain bounded by the coordinate axes and the curve 
$$
\left( 2\sin \left( \frac{\alpha}{2}\right)-\alpha \cos \left( \frac{\alpha}{2}\right),\, 2\sin \left( \frac{\alpha}{2}\right)+(2\pi-\alpha) \cos \left( \frac{\alpha}{2}\right) \right), \quad \alpha \in [0,2\pi].
$$
Let $X_0 = \mu^{-1}(\Omega_0)$ where $\mu \colon \RR^4 \to \RR^2_{\geq 0}$ is the standard moment map. The domain $X_0$ is toric and convex. It's boundary is not smooth.

\begin{theorem}[Ramos, \cite{vr} Theorem 3] There is a symplectic embedding $\phi_R$ from the interior of the Lagrangian bidisk, $\mathrm{int}(\mathbf{P}_L)$, into  $\RR^4$ such that $$\phi_R(\mathrm{int}(\mathbf{P}_L)) = \mathrm{int(X_0)}.$$
\end{theorem}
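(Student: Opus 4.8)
The statement asserts that $\mathrm{int}(\mathbf{P}_L)$ is symplectomorphic to $\mathrm{int}(X_0)$, and since $X_0=\mu^{-1}(\Omega_0)$ is a standard toric domain my plan is to equip $\mathrm{int}(\mathbf{P}_L)$ with global action-angle coordinates whose action variables $(A_1,A_2)$ sweep out $\Omega_0$ (up to a unimodular change of variables) and whose torus degenerates along the boundary strata exactly as the standard toric torus does over $\partial\Omega_0$; the symplectomorphism is then read off by matching $(A_1,A_2)$ with $\mu$ and the angles with the toric angles, and its inverse is $\phi_R$. The coordinates should be built from the one symmetry that $\mathbf{P}_L$ visibly has, the diagonal rotation $(x,y)\mapsto(R_\theta x,R_\theta y)$, with moment map the angular momentum $J(x,y)=x_1y_2-x_2y_1$. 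In the complex coordinates $z_k=x_k+iy_k$ this circle is the real rotation subgroup $\mathrm{SO}(2)\subset\mathrm{U}(2)$, so after a unitary change of coordinates $J$ becomes (up to sign) the standard Hamiltonian $\tfrac12(|z_1|^2-|z_2|^2)$; this is why a $(1,1)$-direction of the model will correspond to a $(1,-1)$-type direction here, and it is consistent with $\mathbf{P}_L$ not being a unitary image of a toric set (compare the end of the proof of Theorem~\ref{lag}).

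First I would reduce by this circle. In polar coordinates $x=\rho(\cos\psi,\sin\psi)$, with conjugate momenta $p_\rho$ (the radial component of $y$) and $p_\psi=J$, one has $\omega=d\rho\wedge dp_\rho+d\psi\wedge dJ$ and $\mathbf{P}_L=\{\rho\le 1,\ p_\rho^2+J^2/\rho^2\le 1\}$. For a regular value $m=J\in(-1,1)$ the reduced space is the explicit planar region $\mathcal R_m=\{(\rho,p_\rho):|m|<\rho<1,\ p_\rho^2<1-m^2/\rho^2\}$, a disk, whose total symplectic area is
\[
A(m)\;=\;\int_{|m|}^{1}2\sqrt{1-m^2/\rho^2}\;d\rho\;=\;2\sin\beta+2(\pi-\beta)\cos\beta,\qquad m=-\cos\beta,\ \ \beta\in(0,\pi).
\]
Taking the residual radial action on $\mathcal R_m$ as the second action variable $A_2$, the joint image of $(J,A_2)$ is a planar region whose curved boundary is the graph of $A$. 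The decisive point is that, after the unimodular change of variables dictated by the above normalization of $J$, this region is exactly $\Omega_0$: writing $\alpha=2\beta$, one has $A(m)=2\sin(\tfrac{\alpha}{2})+(2\pi-\alpha)\cos(\tfrac{\alpha}{2})$ and $2\pi m+A(m)=2\sin(\tfrac{\alpha}{2})-\alpha\cos(\tfrac{\alpha}{2})$, so the curve traced out by $\bigl(2\pi m+A(m),A(m)\bigr)$ as $m$ runs over $(-1,1)$ is exactly the curve defining $\Omega_0$, while the two coordinate edges of $\Omega_0$ come from the loci where one of the unitary coordinates vanishes, and its vertices from the edge $\{|x|=|y|=1,\ x\perp y\}$ of $\partial\mathbf{P}_L$ and from the origin.

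It remains to assemble the fibrewise picture into a global symplectomorphism $\mathrm{int}(\mathbf{P}_L)\to\mathrm{int}(X_0)$. Over the regular values $m\in(-1,1)$ this is the standard fact that two Hamiltonian $S^1$-spaces with the same moment interval, the same Duistermaat--Heckman function (here, the same $A(m)$), and matching data at the residual fixed points are isomorphic, and the obstruction to globalizing the action-angle coordinates vanishes because the base is an interval. The genuinely delicate part, which I expect to be the main obstacle, is the behaviour at the non-generic levels: the endpoints $m=\pm1$, where $\mathcal R_m$ collapses onto the edge $\{|x|=|y|=1,\ x\perp y\}$ and where the defining curve of $\Omega_0$ meets a coordinate axis tangentially (the source of the non-smoothness of $\partial X_0$); the level $m=0$, where the diagonal circle has the isolated fixed point $0\in\RR^4$, with indefinite normal form; and the loci $\{x=0\}$ and $\{y=0\}$, where the polar and chord coordinates degenerate. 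One must show that the coordinates constructed above extend smoothly across all of these and that the resulting $T^2$-action degenerates there exactly as the standard toric action does over the corresponding part of $\partial\Omega_0$, including the required corner behaviour at the two vertices of $\Omega_0$. By contrast the computation of $A(m)$ and its identification with the curve bounding $\Omega_0$ is explicit trigonometry, and once the local normal forms are in place the global identification of the two toric structures is formal.
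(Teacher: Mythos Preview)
The paper does not prove this statement; it is Theorem~3 of Ramos \cite{vr}, quoted without proof and used as a black box in the argument for Proposition~\ref{vr}. There is therefore no ``paper's own proof'' to compare against.

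Your sketch is broadly in the spirit of Ramos's actual construction (reduce by the diagonal $\mathrm{SO}(2)$-action via the angular momentum $J$, build action--angle coordinates, and match the moment image with $\Omega_0$), and you correctly identify the extension across the singular levels as the delicate part. However, your explicit computation contains a genuine error. The integral
\[
\int_{|m|}^{1}2\sqrt{1-m^{2}/\rho^{2}}\,d\rho \;=\; 2\sqrt{1-m^{2}}-2|m|\arccos|m|
\]
is an \emph{even} function of $m$ that vanishes at $m=\pm 1$; your claimed value $2\sin\beta+2(\pi-\beta)\cos\beta$ with $m=-\cos\beta$ agrees with this only for $\beta\in[\pi/2,\pi)$ and gives $2\pi$ rather than $0$ at $\beta=0$. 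Since the reduced area is symmetric in $m\leftrightarrow -m$, the asymmetry needed to land on $\Omega_0$ cannot come from it alone; it enters through the precise choice of the second action (not merely ``the area of $\mathcal R_m$'') and the affine identification of $(J,A_2)$ with the toric moment map, neither of which you have pinned down. Your change of variables $(m,a)\mapsto(2\pi m+a,\,a)$ also has Jacobian $2\pi$, not $1$, so it is not unimodular as stated. These are exactly the places where the real work in \cite{vr} lies.
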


For any real number $\delta$, let $\eta_{\delta} \colon \RR^{2n} \to \RR^{2n}$ be multiplication by $e^{\delta}$. Since $\mathbf{P}_L$ is star-shaped, for every $\delta>0$ the map 
$$\eta_{\delta}\circ \phi_R \circ \eta_{-\delta} \colon \mathbf{P}_L \to \RR^{4}$$ is a symplectic embedding. By the {\em Extension after Restriction Principle}, \cite{schl}, there is a Hamiltonian diffeomorphism $\phi_{\delta} \in \Symp$ such that $$ \phi_{\delta}|_{\mathbf{P}_L} =(\eta_{\delta}\circ \phi_R \circ \eta_{-\delta}) |_{\mathbf{P}_L}.$$
Moreover, for any $\epsilon>0$ we can choose $\delta >0$ so that $d_H(\phi_{\delta}(\mathbf{P}_L), X_0) < \epsilon.$ Since the mean width is continuous in the Hausdorff topology, to prove Proposition \ref{vr} it suffices to prove
\begin{align*}
    \mw(X_0) < \mw(\mathbf{P}_L).
\end{align*}
A simple computation yields $\mw(\mathbf{P}_L) = \frac{8}{3}$. So, it suffices to construct a domain $X_1$ such that $X_0 \subset X_1$ and $\mw(X_1) < \frac{8}{3}.$ A simple toric approximation of $X_0$ does the job. Let $\Omega_1 \subset \RR^2_{\geq 0}$ be the domain bounded by the coordinate axes and the broken line whose first segment connects $(0,2\pi)$ to $(2,2)$ and whose second line segment connects $(2,2)$ to $(2\pi,0)$, see Figure \ref{fig:O1}.
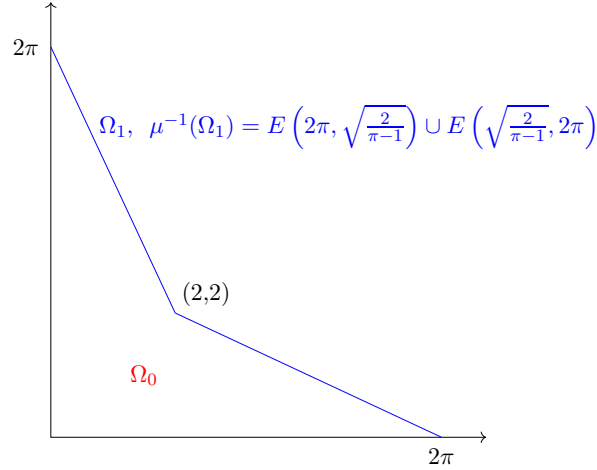
\begin{figure}
    \centering
    \caption{Approximating $X_0 =\mu^{-1}(\Omega_0)$ by $X_1 =\mu^{-1}(\Omega_1)$}
    \label{fig:O1}
        \vspace{.5cm}
    \resizebox{0.5\textwidth}{!}{
    \begin{tikzpicture}
     \tikzstyle{every node}=[font=\small]
    \draw[<->](0,7) -- (0,0) -- (7,0);
    \draw[blue] (0, 2*pi) -- (2, 2);
    \draw[blue] (2, 2) -- (2*pi, 0);
    \node at (2.5, 2.3) {(2,2)};
    \node at (-0.4, 2*pi) {2$\pi$};
    \node at (2*pi, -0.3) {2$\pi$};
    \draw[red, smooth,samples=500,domain=0:6.283185307179586] plot[parametric] function{2*sin((t/2))-t*cos((t/2)),2*sin((t/2))+(2*3.141592653589793-t)*cos((t/2))};
    \node[blue] at (4.8,5) {$\Omega_1,\,\,\,\mu^{-1}(\Omega_1) = E\left(2 \pi, \sqrt{\frac{2}{\pi -1}}\right) \cup E\left(\sqrt{\frac{2}{\pi -1}}, 2\pi\right)$};
    \node[red] at (1.5,1) {$\Omega_0$}; 
\end{tikzpicture}
}
\end{figure}
Since $\Omega_1$ contains $\Omega_0$, the toric domain $X_1 = \mu^{-1}(\Omega_1)$ contains $X_0$. Note that $X_1$ is a union of symplectic ellipsoids of the form $\mathbf{E}(a,b) \cup \mathbf{E}(b,a),$ for $a=\sqrt{2}$ and $b=\sqrt{\frac{2}{\pi-1}}.$ A straight forward computation yields $$\mw(\mathbf{E}(a,b) \cup \mathbf{E}(b,a)) = \frac{8}{3}\frac{1}{a^2-b^2}\left(a^3 -\left(\frac{a^2+b^2}{2}\right)^{\frac{3}{2}}\right).$$
From this we derive the desired inequality,  $$\mw(X_0)< \mw(X_1) \approx 2.63062 < \frac{8}{3}=\mw(\mathbf{P}_L).$$

\end{document}